\documentclass[a4paper,11pt,reqno]{amsart}

\usepackage{fullpage}

\usepackage{amsmath,amssymb,dsfont}

\usepackage{hyperref} 

\usepackage[nobysame,alphabetic,initials,msc-links]{amsrefs}

\DefineSimpleKey{bib}{how}

\renewcommand{\eprint}[1]{#1}
\BibSpec{misc}{%
  +{}{\PrintAuthors}  {author}
  +{,}{ \textit}      {title}
  +{,}{ }             {how}
  +{}{ \parenthesize} {date}
  +{,} { available at \eprint}        {eprint}
  +{,}{ available at \url}{url}
  +{,}{ }             {note}
  +{.}{}              {transition}
}

\usepackage[OT2,T1]{fontenc}
\DeclareSymbolFont{cyrletters}{OT2}{wncyr}{m}{n}
\DeclareMathSymbol{\Sha}{\mathalpha}{cyrletters}{"58}

\numberwithin{equation}{section}

\newtheorem{theorem}{Theorem}[section]
\newtheorem{corollary}[theorem]{Corollary}

\newtheorem{lemma}[theorem]{Lemma}
\newtheorem{proposition}[theorem]{Proposition}

\theoremstyle{remark}
\newtheorem{remark}[theorem]{Remark}
\newtheorem{example}[theorem]{Example}
\theoremstyle{definition}
\newtheorem{definition}[theorem]{Definition}

\theoremstyle{plain}
\newtheorem{innercustomthm}{Theorem}
\newenvironment{customthm}[1]
  {\renewcommand\theinnercustomthm{#1}\innercustomthm}
  {\endinnercustomthm}

\newcommand\bp{\begin{proof}}
\newcommand\ep{\end{proof}}
\newcommand\ee{\nopagebreak\mbox{\ }\hfill$\diamond$}

\newcommand{\un}{\mathds{1}}

\makeatletter
\newcommand{\llangle}{\mathopen{\langle\mkern-4.5mu\langle}}
\newcommand{\rrangle}{\mathclose{\rangle\mkern-4.5mu\rangle}}
\makeatother

\newcommand\Ad{\operatorname{Ad}}
\newcommand\ad{\operatorname{ad}}

\newcommand\Aut{\operatorname{Aut}}

\newcommand\Hom{\operatorname{Hom}}

\newcommand\Op{\operatorname{Op}}
\newcommand\Tr{\operatorname{Tr}}
\newcommand\tr{\operatorname{tr}}
\newcommand{\BCH}{\operatorname{BCH}}

\newcommand{\C}{{\mathbb C}}
\newcommand{\E}{\mathbb E}
\newcommand{\FF}{{\mathbb F}}

\newcommand{\Q}{{\mathbb Q}}

\newcommand\T{{\mathbb T}}
\newcommand\Z{{\mathbb Z}}

\newcommand{\F}{{\mathcal F}}

\newcommand\OO{\mathcal O}

\newcommand{\g}{\mathfrak g}
\newcommand{\n}{\mathfrak n}
\newcommand{\p}{\mathfrak p}

\newcommand\blhd{\blacktriangleleft}

\begin{document}

\date{October 6, 2026; revised October 21, 2026}

\title{Facets of nondegeneracy: central type factor groups, Heisenberg representations and Hadamard matrices}

\author[Bieliavsky]{Pierre Bieliavsky}
\address{Institut de Recherche en Math\'ematique et Physique, Universit\'e Catholique de Louvain, Chemin du Cyclotron, 2, 1348 Louvain-la-Neuve, Belgium}
\email{Pierre.Bieliavsky@uclouvain.be}

\author[Gayral]{Victor Gayral}
\address{Laboratoire de Math\'ematiques, CNRS UMR 9008, Universit\'e de Reims Champagne-Ardenne,
Moulin de la Housse - BP 1039,
51687 Reims, France}
\email{victor.gayral@univ-reims.fr}

\author[Neshveyev]{Sergey Neshveyev}
\address{Department of Mathematics, University of Oslo,
P.O. Box 1053 Blindern, NO-0316 Oslo, Norway}
\email{sergeyn@math.uio.no}

\author[Tuset]{Lars Tuset}
\address{Department of Computer Science, OsloMet - storbyuniversitetet,
P.O. Box 4 St. Olavs plass, NO-0130 Oslo, Norway}
\email{larst@oslomet.no}

\begin{abstract}
Given a pair of complementary subgroups $P$ and $N$ of a finite group $G$, we first discuss how the same type of functions $\mathbb E\colon P\times N\to \mathbb T$, which we call bi-$1$-cocycles, appear naturally in the study of projective representations of $G$ of Heisenberg type and second cohomology of $G$ and of its dual quantum group $\widehat G$. We show then that such representations are irreducible and the corresponding ordinary and dual $2$-cocycles are nondegenerate if and only if the matrix $(\mathbb E(p,n))_{p,n}$ is invertible, if and only if it is Hadamard. We illustrate the result with examples arising from symplectic nilpotent Lie algebras over finite fields with two complementary Lagrangian subalgebras.
\end{abstract}

\maketitle

\bigskip

\section*{Introduction}

This paper is motivated by the problem of quantization of groups, but its principal result can be formulated as a solution to a very natural problem in representation theory. Assume we are given a finite group $G$ with two complementary subgroups $P$ and $N$, meaning that $P\cap N=\{e\}$ and $|P|\,|N|=|G|$, in which case one writes $G=P\bowtie N$. By a representation of $G$ of Heisenberg type we mean any projective unitary representation $\pi\colon G\to U(H)$ of dimension $|G|^{1/2}$ such that its restrictions to~$P$ and~$N$ are honest representations. The problem is to decide when such a representation is irreducible.

In order to formulate the result, note that by assumption any element of $G$ can be written uniquely as $pn$, with $p\in P$ and $n\in N$. The same element can also be written as a product of elements of $N$ and $P$, so we have an exchange relation
$$
pn=(p\rhd n)(p\lhd n),
$$
with $p\rhd n\in N$ and $p\lhd n\in P$. Applying $\pi$ we get
$$
\Phi(p,n)\pi(p)\pi(n)=\pi(p\rhd n)\pi(p\lhd n)
$$
for some phase factor $\Phi(p,n)\in\T$. Here is one possible formulation of our main result.

\begin{customthm}{A}\label{thm:A}
Assume $G=P\bowtie N$ and $\pi\colon G\to U(H)$ is a projective unitary representation of~$G$ of Heisenberg type with respect to this factorization, with the corresponding phase factor~$\Phi$. Then the following conditions are equivalent:
\begin{enumerate}
  \item the representation $\pi$ is irreducible;
  \item the matrix $(\Phi(p,n))_{p\in P,n\in N}$ is invertible;
  \item the matrix $(\Phi(p,n))_{p\in P,n\in N}$ is Hadamard.
\end{enumerate}
Moreover, if these conditions are satisfied, then $\pi$ is determined uniquely, up to projective unitary equivalence (and even up to unitary equivalence if we in addition require $\pi(pn)=\pi(p)\pi(n)$), by~$\Phi$. We then call $\pi$ the Heisenberg representation of $G$ with phase~$\Phi$.
\end{customthm}

Recall that a (complex) Hadamard matrix is an $m\times m$ matrix $H=(H_{ij})_{i,j}$ such that $|H_{ij}|=1$ for all $i,j$ and $H$ is a scalar multiple of a unitary matrix, that is, $HH^*=mI$. As we will see, the Hadamard matrices in Theorem~\ref{thm:A} are of Butson type, that is, their entries are roots of unity.

The phase $\Phi$ defines a $\T$-valued $2$-cocycle $\sigma$ on $G$, and irreducibility of $\pi$ is equivalent to simplicity of the twisted group algebra $\C^\sigma G$. Cocycles with this property are called nondegenerate and groups admitting such cocycles are known as central type factor groups (ctfgs). In principle, it is well-known how to characterize nondegeneracy without talking about representations or twisted group algebras: the requirement is that for every $g\ne e$ the character $h\mapsto\sigma(g,h)\overline{\sigma(h,g)}$ on the centralizer $C(g)$ of $g$ should be nontrivial. Although this characterization is explicit, applying it may require substantial information about the centralizers in $G$. Theorem~\ref{thm:A} gives an alternative criterion for cocycles admitting two complementary isotropic subgroups.

If we replace a representation $\pi$ of Heisenberg type by another one in the same projective equivalence class, this in general transforms the phase $\Phi$ in a nontrivial way. Namely, we can multiply $\pi|_P$ and $\pi|_N$ by some characters $\chi_P$ and $\chi_N$, and then the  new phase is
$$
\Phi(p,n)\frac{\chi_P(p\lhd n)}{\chi_P(p)}\frac{\chi_N(p\rhd n)}{\chi_N(n)}.
$$
Already examples coming from semidirect products of abelian groups show that such transformations can dramatically change the rank of $(\Phi(p,n))_{p,n}$. Theorem~\ref{thm:A} implies that, somewhat surprisingly, once we have reached the maximal possible rank $|G|^{1/2}$, the same transformations can neither change the rank nor destroy the Hadamard property.

Every nondegenerate $2$-cocycle on $G$ is known to determine a cohomology class of $2$-cocycles on the dual quantum group $\widehat G$~\cites{MR733301,MR990110,MR1264314,MR1747109}, that is, invertible elements $\Omega\in\C G\otimes\C G$ such that
$$
(\Omega\otimes1)(\Delta\otimes\iota)(\Omega)=(1\otimes\Omega)(\iota\otimes\Delta)(\Omega).
$$
The cocycles $\Omega$ defined by nondegenerate cocycles on $G$ are themselves nondegenerate in the same sense as before -- the corresponding twisted quantum group algebras are simple. A special feature of the cocycles $\Omega$ that we get from Theorem~\ref{thm:A} is that they can be chosen so that their support in the group basis is exactly $P\times N$ or $N\times P$. It turns out that, up to a normalization, all cocycles on $\widehat G$ with this property are obtained this way.

\begin{customthm}{B}\label{thm:B}
Assume $G=P\bowtie N$. Then, for any function $\Phi\colon P\times N\to\C^\times$ with $\Phi(e,e)=1$, the following conditions are equivalent:
\begin{enumerate}
  \item the element $\Omega:=|G|^{-1/2}\sum_{p\in P,n\in N}\Phi(p,n^{-1})\,p\otimes n\in\C G\otimes\C G$ is a $2$-cocycle on $\widehat G$;
  \item the function $\Phi$ is the phase of an irreducible projective unitary representation of Heisenberg type, so that it satisfies the equivalent conditions of Theorem~\ref{thm:A}.
\end{enumerate}
Moreover, if these conditions are satisfied, then the cocycle $\Omega$ is nondegenerate and unitary.
\end{customthm}


The present work grew out of our attempts to find analytical quantizations of certain Lie groups, for which we need verifiable conditions that guarantee unitarity of cocycle candidates on the dual quantum group, cf.~\cites{GM,BGNT4,BGNT5}. This more general setting is still in the process of development and will be discussed in detail elsewhere.

In the finite group case one can also look at the above results not as a way to construct cocycles on $\widehat G$, but as a curious source of examples of Hadamard matrices arising from ctfgs. Hadamard matrices have long been viewed  as some sort of quantum Fourier matrices. For the Hadamard matrices $(\Phi(p,n))_{p,n}$ this point of view is particularly justified, as the operators $\ell^2(N)\to\ell^2(P)$ defined by them have a number of similarities with the usual Fourier transform  for abelian groups and play a central role in our analysis.

One of the best-known constructions of ctfgs is the bijective $1$-cocycle method developed by Etingof and Gelaki~\cites{MR1653340,MR1747109}, see also~\cites{MR2534251,BGNT4} for generalizations. In its semidirect-product form, one starts with a group $G=P\ltimes N$, where $N$ is abelian, such that the dual action of~$P$ on~$\widehat N$ can be cocycle perturbed to a free transitive affine action $P\curvearrowright\widehat N$. These examples fit into our framework, but their phase matrices are the Fourier matrices of~$N$, up to permutations of rows or columns.

To obtain examples beyond this construction, we consider symplectic nilpotent Lie algebras $(\mathfrak g,\omega)$ over finite fields with two complementary Lagrangian subalgebras. When the characteristic is sufficiently large, the Lazard correspondence produces a finite group with  a nondegenerate group cocycle and two complementary isotropic subgroups. Our criterion then yields an Hadamard matrix, which truly deserves to be called a quantization of a Fourier matrix defined by~$\omega$. In this nilpotent setting, the Hadamard property can also be deduced from results in~\cite{MR2801175} on unitary intertwiners between representations defined using different polarization models.

We carry out the construction explicitly for an eight-dimensional symplectic Lie algebra having complementary Lagrangian subalgebras but no Lagrangian ideals. The corresponding nondegenerate group cocycle lies outside the bijective $1$-cocycle construction and its extension to groups with normal Lagrangian subgroups~\cites{MR2534251,MR3210715}. The resulting Hadamard matrices are neither equivalent to Fourier matrices nor of Di\c{t}\u{a} type. We do not claim that they cannot be obtained by other known constructions, but this example shows in any case that  the Hadamard matrices in Theorem~\ref{thm:A} can be quite nontrivial.

Let us finally say that connections between Hadamard matrices, group cocycles and monomial representations have been studied extensively. Classical cocyclic constructions associate to a cocycle on a group $G$ a matrix of size $|G|$, see, for example,~\cite{MR2265694}, rather than of size $|G|^{1/2}$ as in our case. More general approaches use intertwining spaces of monomial representations, with representation-theoretic conditions yielding orthogonality, see, in particular, \cites{MR4815669,ACDE}. Our construction belongs to this broader framework.

\medskip

\paragraph{\bf Use of AI tools.}
The final section of the paper was written with extensive help of OpenAI's ChatGPT 6.0 Astra. In particular, AI found a relevant eight-dimensional example in the literature, adapted it to the finite field setup and performed all computations. AI was also used to check arguments and to suggest references and improvements to the exposition throughout the paper. The authors verified the resulting arguments and computations and take full responsibility for the content.

\bigskip

\section{Matched pairs and cocycles}

Throughout the paper $G$ denotes a finite group. Given subgroups $P,N\subset G$, one says that $(P,N)$ is a \emph{matched pair} if the map $P\times N\to G$, $(p,n)\mapsto pn$, is a bijection. This is equivalent to requiring $P\cap N=\{e\}$ and $|P|\,|N|=|G|$. We then write $G=P\bowtie N$. We get a left action $\rhd\colon P\times N\to N$ and a right action $\lhd\colon P\times N\to P$ defined by the identity
$$
pn=(p\rhd n)(p\lhd n).
$$
The maps $p\rhd$ and $\lhd n$ are called \emph{dressing transformations}. They are not group automorphisms in general, but they still satisfy certain cocycle-type relations that we list below.

\begin{lemma}
The following identities hold:
\begin{equation}\label{eq:dress1}
(p_1p_2)\lhd n=\big(p_1\lhd(p_2\rhd n)\big)(p_2\lhd n),\qquad p\rhd(n_1n_2)=(p\rhd n_1)\big((p\lhd n_1)\rhd n_2\big).
\end{equation}
In particular, the dressing transformations act trivially on the unit element and we have
\begin{equation}\label{eq:dress2}
(p^{-1}\lhd n)^{-1}=p\lhd(p^{-1}\rhd n),\qquad (p\rhd n^{-1})^{-1}=(p\lhd n^{-1})\rhd n.
\end{equation}
\end{lemma}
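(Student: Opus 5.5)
The plan is to use associativity of the product in $G$ together with uniqueness of the factorization $G=N\cdot P$. The map $N\times P\to G$, $(n,p)\mapsto np$, is also a bijection: apply inversion to the bijection $P\times N\to G$. This is what makes $p\rhd n$ and $p\lhd n$ well defined, and it lets us compare factorizations.

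For the first identity in \eqref{eq:dress1}, compute $p_1p_2n$ in two ways. Directly, $p_1p_2n=((p_1p_2)\rhd n)((p_1p_2)\lhd n)$. Alternatively, first write $p_2n=(p_2\rhd n)(p_2\lhd n)$, and then rewrite $p_1(p_2\rhd n)=\big(p_1\rhd(p_2\rhd n)\big)\big(p_1\lhd(p_2\rhd n)\big)$. This gives
$$
p_1p_2n=\big(p_1\rhd(p_2\rhd n)\big)\big(p_1\lhd(p_2\rhd n)\big)(p_2\lhd n),
$$
which is again a product of an element of $N$ and an element of $P$. Uniqueness of the $N\cdot P$ decomposition yields both $(p_1p_2)\rhd n=p_1\rhd(p_2\rhd n)$ and the claimed formula for $(p_1p_2)\lhd n$. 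The second identity follows symmetrically by factoring $pn_1n_2$: first $pn_1=(p\rhd n_1)(p\lhd n_1)$, then $(p\lhd n_1)n_2=\big((p\lhd n_1)\rhd n_2\big)\big((p\lhd n_1)\lhd n_2\big)$. Comparing the $N$-parts gives the formula for $p\rhd(n_1n_2)$, and the $P$-part gives $p\lhd(n_1n_2)=(p\lhd n_1)\lhd n_2$. In particular $\rhd$ is a left action and $\lhd$ a right action.

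For the unit elements, note that $pe=ep$ and $en=ne$. Uniqueness then gives $p\rhd e=e$, $p\lhd e=p$, $e\rhd n=n$ and $e\lhd n=e$. Here "act trivially on the unit" means $p\rhd e=e$ and $e\lhd n=e$.

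For \eqref{eq:dress2}, take $p_1=p$ and $p_2=p^{-1}$ in the first identity of \eqref{eq:dress1}. The left side becomes $e\lhd n=e$, so $e=\big(p\lhd(p^{-1}\rhd n)\big)(p^{-1}\lhd n)$, which rearranges to the first formula. Similarly, take $n_1=n^{-1}$ and $n_2=n$ in the second identity. Then $e=p\rhd e=(p\rhd n^{-1})\big((p\lhd n^{-1})\rhd n\big)$, giving the second formula.

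There is no real obstacle here. The only point needing care is to use the uniqueness of the factorization in the order $N\cdot P$ rather than $P\cdot N$.
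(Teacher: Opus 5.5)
Your proof is correct and complete. The paper states this lemma without proof, and your argument is the standard one it implicitly relies on: compare the two $N\cdot P$ factorizations of $p_1p_2n$ and of $pn_1n_2$, read off the unit identities from $pe=ep$ and $en=ne$, and specialize to $p_2=p^{-1}$ and to $(n_1,n_2)=(n^{-1},n)$.
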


It will sometimes be convenient to conjugate the right action of $N$ on $P$ by the inverse, that is, we define
a new right action $\blhd\colon P\times N\to P$ by
$$
p\blhd n:=(p^{-1}\lhd n)^{-1}.
$$
The point is that this puts $P$ and $N$ on a more equal footing, since $p\blhd n=n^{-1}\rhd p$, where the last action is defined by the matched pair $(N,P)$.

\smallskip

We will study $2$-cocycles on $G$ of a particular form. Recall first that a $\T$-valued $2$-cocycle $\sigma\in Z^2(G;\T)$ is a map $\sigma\colon G\times G\to\T$ satisfying the identity
$$
\sigma(g,h)\sigma(gh,k)=\sigma(g,hk)\sigma(h,k)\qquad (g,h,k\in G).
$$
The coboundaries are cocycles of the form $(\partial b)(g,h)=b(g)b(h)b(gh)^{-1}$, where $b\colon G\to\T$.

It was shown by Mackey~\cite{MR360923} that if $G=P\bowtie N$, then up to coboundaries any cocycle decomposes into three terms, two of which arise from cocycles on $P$ and $N$, while the third one is defined by a function $P\times N\to\T$. When the first two terms vanish, this leads to the following definition.

\begin{definition}\label{def:b1-cocycle}
We say that a function $\E\colon P\times N\to\T$ is a \emph{bi-$1$-cocycle} on $G=P\bowtie N$ if it satisfies the following identities:
\begin{equation}\label{eq:b1-cocycle1}
\E(p_1p_2,n)=\E(p_1,n)\E(p_2,p_1^{-1}\rhd n),\qquad \E(p,n_1n_2)=\E(p,n_1)\E((p^{-1}\lhd n_1)^{-1},n_2).
\end{equation}
\end{definition}

As we will see, in terms of the phase functions $\Phi$ used in the introduction, the bi-$1$-cocycles are the functions $\E(p,n)=\Phi(p^{-1},n)$.

Note that in terms of the action $\blhd$ the second relation for $\E$ becomes
\begin{equation*}\label{eq:b1-cocycle2}
\E(p,n_1n_2)=\E(p,n_1)\E(p\blhd n_1,n_2).
\end{equation*}
Observe also that by taking $n_i=e$ and $p_i=e$ in the above identities we get
\begin{equation*}\label{eq:normalization}
\E(p,e)=\E(e,n)=1.
\end{equation*}

The definition of a bi-$1$-cocycle makes of course sense for any abelian target group instead of~$\T$. In particular, one might be tempted to consider $\C^\times$-valued bi-$1$-cocycles, but as we will see shortly, by finiteness of $G$ they automatically take values in the roots of unity.

\smallskip

Given a bi-$1$-cocycle $\E\colon P\times N\to\T$, it can be checked (and in an indirect way this will be done in the next section by constructing certain projective representations) that we get a $2$-cocycle~$\sigma_\E$ on~$G$ defined by
\begin{equation}\label{eq:MackeyE}
\sigma_{\E}(p_1n_1,p_2n_2):=\E(p_2,n_1^{-1}).
\end{equation}

Given a class $c\in H^2(G,\T)$, a subgroup $H\subset G$ is called \emph{isotropic} with respect to $c$ if this cohomology class has trivial restriction to $H$, that is, $\operatorname{res}^G_Hc=0$ in $H^2(H;\T)$. It is clear that the subgroups $P$ and $N$ are isotropic with respect to the cohomology classes $[\sigma_\E]$. The following proposition, which is essentially a particular case of results of Mackey~\cite{MR360923} or Kac~\cite{Kac} (see Remark~\ref{rem:Kac} below), shows that this is a characteristic property of these classes.

\begin{proposition}\label{prop:Mackey}
Assume $G=P\bowtie N$ and $c\in H^2(G;\T)$ is a class with respect to which~$P$ and~$N$ are isotropic. Then $c$ can be represented by a cocycle $\sigma\in Z^2(G;\T)$ such that $\sigma|_{P\times P}=1$ and $\sigma|_{N\times N}=1$. For any such $\sigma$, the function $\E\colon P\times N\to\T$ defined by
\begin{equation}\label{eq:b1-from-2}
\E(p,n):=\sigma(p^{-1}\rhd n, p^{-1}\lhd n)\sigma(p^{-1},n)^{-1}
\end{equation}
is a bi-$1$-cocycle and $c=[\sigma_\E]$, where $\sigma_\E$ is the $2$-cocycle given by~\eqref{eq:MackeyE}. All bi-$1$-cocycles $\E$ are obtained this way from appropriate $c$ and $\sigma$.
\end{proposition}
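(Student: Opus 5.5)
First I would normalize the representative. Take any $\sigma_0$ representing $c$. Since $\operatorname{res}_P c=0$ and $\operatorname{res}_N c=0$, there are functions $b_P\colon P\to\T$ and $b_N\colon N\to\T$ with $\sigma_0|_{P\times P}=\partial b_P$ and $\sigma_0|_{N\times N}=\partial b_N$. Both are normalized, so $b_P(e)=b_N(e)=\sigma_0(e,e)$. We may first normalize $\sigma_0$ so that $\sigma_0(e,e)=1$, and then take $b_P(e)=b_N(e)=1$. Define $b(pn):=b_P(p)b_N(n)$ using the unique factorization. Then $b|_P=b_P$ and $b|_N=b_N$, and $\sigma:=\sigma_0\,\partial b^{-1}$ represents $c$ and is trivial on $P\times P$ and on $N\times N$.

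Next, fix such a $\sigma$. The cleanest way to organize the verification is through the twisted group algebra $\C^\sigma G$ with basis $\lambda_g$ and $\lambda_g\lambda_h=\sigma(g,h)\lambda_{gh}$. By triviality on $P$ and $N$, the maps $p\mapsto\lambda_p$ and $n\mapsto\lambda_n$ are honest representations. One checks that $\lambda_p\lambda_n=\sigma(p,n)\lambda_{pn}$. The cocycle identity applied to $(p,n,e)$-type triples shows that \eqref{eq:b1-from-2} is exactly the exchange phase:
$$\lambda_{p^{-1}}\lambda_n=\E(p,n)\,\lambda_{p^{-1}\rhd n}\lambda_{p^{-1}\lhd n}.$$
Up to constants this is $\Phi(p^{-1},n)$ from the introduction. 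The bi-$1$-cocycle identities \eqref{eq:b1-cocycle1} then follow from associativity. For $p_1p_2$, write $\lambda_{(p_1p_2)^{-1}}\lambda_n=\lambda_{p_2^{-1}}\lambda_{p_1^{-1}}\lambda_n$ and commute $\lambda_n$ past the two factors one at a time. Then compare with the one-step exchange, using \eqref{eq:dress1} and linear independence of the $\lambda_{n'}\lambda_{p'}$. The identity for $n_1n_2$ is obtained the same way by moving $\lambda_{p^{-1}}$ past $\lambda_{n_1}$ and then past $\lambda_{n_2}$, using the second formula of \eqref{eq:dress1} and \eqref{eq:dress2} to identify $(p^{-1}\lhd n_1)^{-1}$.

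To prove $c=[\sigma_\E]$, I would set $b(pn):=\sigma(p,n)$, so that $\lambda_{pn}=\sigma(p,n)^{-1}\lambda_p\lambda_n$. I would then compute
$$\lambda_{p_1}\lambda_{n_1}\lambda_{p_2}\lambda_{n_2}$$
by using the exchange relation to rewrite $\lambda_{n_1}\lambda_{p_2}$ in $PN$ order. By the relation above, read with $p^{-1}=n_1\cdots$, the phase obtained is $\E(p_2,n_1^{-1})$ up to the normalizations. This yields $\sigma=\sigma_\E\cdot\partial b$ or its inverse, and I would adjust the conventions so that it comes out exactly. I expect this bookkeeping step to be the main obstacle: getting $\lambda_n\lambda_p$ in terms of $\E$ requires inverting the exchange relation via \eqref{eq:dress2}, and the conventions ($p^{-1}$, $n^{-1}$) must match \eqref{eq:MackeyE} precisely. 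As a byproduct, this computation also shows that $\sigma_\E$ is a cocycle whenever $\E$ arises in this way.

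Finally, for surjectivity, given any bi-$1$-cocycle $\E$ I would directly verify that $\sigma_\E$ is a $2$-cocycle. Writing $g=p_1n_1$, $h=p_2n_2$, $k=p_3n_3$ and computing $gh=p_1(p_2\blhd n_1^{-1}\ldots)$ via the dressing actions, the cocycle identity reduces to the two identities \eqref{eq:b1-cocycle1}. Alternatively, one can build the operators $\lambda$ on $\ell^2(N)$ as announced for the next section. Clearly $\sigma_\E$ is trivial on $P\times P$ and on $N\times N$, because $\E(p,e)=\E(e,n)=1$. Applying \eqref{eq:b1-from-2} to $\sigma=\sigma_\E$ gives
$$\sigma_\E(p^{-1}\rhd n,p^{-1}\lhd n)\,\sigma_\E(p^{-1},n)^{-1}=\E(p^{-1}\lhd n,(p^{-1}\rhd n)^{-1})\cdot 1,$$
and \eqref{eq:b1-cocycle1} with \eqref{eq:dress2} identifies this with $\E(p,n)$. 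This last identity is where I would be most careful.
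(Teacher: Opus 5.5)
Your overall plan is the paper's: the basis $\lambda_g$ of $\C^\sigma G$ plays the role of the section $s$, your $t_{pn}=\lambda_p\lambda_n$ is the paper's section $t$, and the last step is the identity \eqref{eq:flip}. However, the exchange relation on which the key step rests is inverted. Since $\lambda_{p^{-1}}\lambda_n=\sigma(p^{-1},n)\lambda_{p^{-1}n}$ and $\lambda_{p^{-1}\rhd n}\lambda_{p^{-1}\lhd n}=\sigma(p^{-1}\rhd n,p^{-1}\lhd n)\lambda_{p^{-1}n}$, definition \eqref{eq:b1-from-2} gives
\[
\E(p,n)\,\lambda_{p^{-1}}\lambda_n=\lambda_{p^{-1}\rhd n}\lambda_{p^{-1}\lhd n}.
\]
No cocycle identity is involved. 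So the coefficient in your relation should be $\E(p,n)^{-1}$, not $\E(p,n)$.

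For the bi-$1$-cocycle identities this is harmless, because $\E$ satisfies \eqref{eq:b1-cocycle1} if and only if $\E^{-1}$ does. It is not harmless for $c=[\sigma_\E]$. Running your computation of $\lambda_{p_1}\lambda_{n_1}\lambda_{p_2}\lambda_{n_2}$ with your relation yields $\sigma\,\partial b=\sigma_\E^{-1}$, that is, $c=-[\sigma_\E]$, which is false whenever $2c\neq0$. You cannot ``adjust the conventions'', since $\E$ and $\sigma_\E$ are fixed by \eqref{eq:b1-from-2} and \eqref{eq:MackeyE}. The hedge ``or its inverse'' is exactly what the proof has to settle, and your proposal leaves it open.

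To fix it, start from the correct relation and invert both sides. Here $\lambda_e=1$, $\lambda_p^{-1}=\lambda_{p^{-1}}$ and $\lambda_n^{-1}=\lambda_{n^{-1}}$, because $\sigma$ is trivial on $P\times P$ and $N\times N$. Then substitute $p=p_2$, $n=n_1^{-1}$. This gives $\lambda_{n_1}\lambda_{p_2}=\E(p_2,n_1^{-1})\,\lambda_{p'}\lambda_{n'}$ with $p'n'=n_1p_2$. Hence $t_{p_1n_1}t_{p_2n_2}=\E(p_2,n_1^{-1})\,t_{p_1n_1p_2n_2}$, and $\sigma_\E=\sigma\,\partial b$ exactly, with $b(pn)=\sigma(p,n)$; this is the paper's computation \eqref{eq:cocycle-computation}.

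The rest of your proposal is sound. Your direct check that $\sigma_\E$ is a $2$-cocycle for an arbitrary bi-$1$-cocycle is a useful addition: the paper does not prove this inside the proposition and relies on the representations $\pi^P,\pi^N$ of the next section. The check is short. Write $n_1p_2=p'n'$ and $n_2p_3=p''n''$. One application of each relation in \eqref{eq:b1-cocycle1} reduces the cocycle identity to $\E(p_3\blhd n_2^{-1},n'^{-1})=\E(p'',p_2^{-1}\rhd n_1^{-1})$. This holds because $p_3\blhd n_2^{-1}=p''$ and $p_2^{-1}\rhd n_1^{-1}=n'^{-1}$.
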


\bp
Consider the central extension $1\to\T\to G_c\to G\to1$ corresponding to $c$. Fix a section $s\colon G\to G_c$. The assumption that $P$ and $N$ are isotropic implies that we can modify $s$ on $P$ and $N$ so that it becomes a homomorphism on these groups. Then the corresponding cocycle $\sigma(g,h):=s(g)s(h)s(gh)^{-1}$ has the properties $\sigma|_{P\times P}=1$ and $\sigma|_{N\times N}=1$.

Assume now that we are given a cocycle $\sigma$ representing the class $c$ such that $\sigma|_{P\times P}=1$ and $\sigma|_{N\times N}=1$. We can find a section $s\colon G\to G_c$ defining it. Then $s|_P$ and $s|_N$ are homomorphisms. Applying $s$ to the identity $pn=(p\rhd n)(p\lhd n)$ we get
$$
\sigma(p,n)^{-1}s(p)s(n)=\sigma(p\rhd n,p\lhd n)^{-1}s(p\rhd n)s(p\lhd n),
$$
which shows that $\E$ is determined by the identity
\begin{equation}\label{eq:exchange}
\E(p,n)s(p^{-1})s(n)=s(p^{-1}\rhd n)s(p^{-1}\lhd n).
\end{equation}
Applying this separately to $p=p_1p_2$ and $n=n_1n_2$ we get the bi-$1$-cocycle identities.

Next, define a new section $t\colon G\to G_c$ by $t(pn):=s(p)s(n)$. Then
\begin{align}
t(p_1n_1)t(p_2n_2)&=s(p_1)\big(s(p_2^{-1})s(n_1^{-1})\big)^{-1}s(n_2)\notag\\
&=\E(p_2,n_1^{-1})s(p_1)s\big((p_2^{-1}\lhd n_1^{-1})^{-1}\big)s\big((p_2^{-1}\rhd n_1^{-1})^{-1}\big)s(n_2)\notag\\
&=\E(p_2,n_1^{-1})t(p_1n_1p_2n_2).\label{eq:cocycle-computation}
\end{align}
Thus, the cocycle defined by the section $t$ is $\sigma_\E$, so $c=[\sigma_\E]$.

To prove the last part of the proposition it suffices to show that the construction~\eqref{eq:b1-from-2}, being applied to $\sigma_\E$, gives back $\E$, that is, it defines a left inverse to the map
$$
\{\text{bi-$1$-cocycles}\}\to\{\text{$2$-cocycles that are identically $1$ on $P$ and $N$}\},\quad \E\mapsto\sigma_\E.
$$
In order to show this, we need to check that for any bi-$1$-cocycle $\E$ we have the identity
\begin{equation}\label{eq:flip}
\E(p^{-1}\lhd n,(p^{-1}\rhd n)^{-1})=\E(p,n).
\end{equation}
Replacing $p$ by $(p\lhd n^{-1})^{-1}$ and using~\eqref{eq:dress2}, we equivalently need to check that
$$
\E(p,p\rhd n^{-1})=\E(p^{-1}\blhd n^{-1},n).
$$
This identity is true, since by the cocycle relations both sides are equal to $\E(p^{-1},n^{-1})^{-1}$.
\ep

\begin{corollary}\label{cor:cobound}
Given two bi-$1$-cocycles $\E_1$ and $\E_2$, we have $[\sigma_{\E_1}]=[\sigma_{\E_2}]$ in $H^2(G;\T)$ if and only if there exist homomorphisms
$\chi_P\colon P\to\T$ and $\chi_N\colon N\to\T$ such that
$$
\E_1(p,n)=\E_2(p,n)\frac{\chi_P(p^{-1})}{\chi_P(p^{-1}\lhd n)}\frac{\chi_N(n)}{\chi_N(p^{-1}\rhd n)}.
$$
\end{corollary}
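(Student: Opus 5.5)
The plan is to work with central extensions and sections, as in the proof of Proposition~\ref{prop:Mackey}. Let $c=[\sigma_{\E_1}]$ and let $1\to\T\to G_c\to G\to1$ be the corresponding extension.

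\emph{Sufficiency.} Suppose $\E_1$ and $\E_2$ are related by characters $\chi_P,\chi_N$ as in the statement. Choose a section $s_2$ realizing $\sigma_{\E_2}$, that is, $s_2(pn)=s_2(p)s_2(n)$ with $s_2|_P$, $s_2|_N$ homomorphisms. Then $s_2$ satisfies the exchange relation~\eqref{eq:exchange} with $\E_2$. Define $s_1(p):=\chi_P(p)^{-1}s_2(p)$ and $s_1(n):=\chi_N(n)^{-1}s_2(n)$; these are again homomorphisms. Inserting $s_1$ into~\eqref{eq:exchange}, the new exchange factor is
\[
\E_2(p,n)\frac{\chi_P(p^{-1})\chi_N(n)}{\chi_P(p^{-1}\lhd n)\chi_N(p^{-1}\rhd n)}=\E_1(p,n).
\]
Here the exponents must be matched so that they agree exactly with the formula in the statement; one may need $\chi\mapsto\bar\chi$, which is harmless. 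Set $t(pn)=s_1(p)s_1(n)$. The computation~\eqref{eq:cocycle-computation} shows that $t$ has cocycle $\sigma_{\E_1}$ inside the same extension $G_c$ for $\sigma_{\E_2}$. Hence the two classes coincide. Equivalently, $\sigma_{\E_1}/\sigma_{\E_2}=\partial b$ with $b(pn)=\chi_P(p)^{-1}\chi_N(n)^{-1}$, which could also be checked directly.

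\emph{Necessity.} Assume $[\sigma_{\E_1}]=[\sigma_{\E_2}]$. Then $\sigma_{\E_1}=\sigma_{\E_2}\,\partial b$ for some $b\colon G\to\T$. Equivalently, both cocycles come from sections $s_1,s_2\colon G\to G_c$ of one extension with $s_1=b\,s_2$. Both sections are homomorphic on $P$ and on $N$, since each $\sigma_{\E_i}$ is trivial on $P\times P$ and on $N\times N$. Therefore $b|_P$ and $b|_N$ are characters; set $\chi_P:=b|_P^{-1}$ and $\chi_N:=b|_N^{-1}$, up to the sign convention.

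Proposition~\ref{prop:Mackey}, more precisely its last part (the left-inverse identity~\eqref{eq:flip}), recovers $\E_i$ from the section via~\eqref{eq:exchange} using only $s_i|_P$ and $s_i|_N$. Indeed, $\E_i$ is obtained from $\sigma_{\E_i}$ by~\eqref{eq:b1-from-2}. Comparing the exchange relations~\eqref{eq:exchange} for $s_1=b\,s_2$ on $P$ and $N$ then gives exactly the stated relation between $\E_1$ and $\E_2$.

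The only real care point is this last step. Formula~\eqref{eq:b1-from-2} evaluates $\sigma$ at the mixed pair $(p^{-1}\rhd n,\,p^{-1}\lhd n)$ and at $(p^{-1},n)$. So one should verify that~\eqref{eq:exchange} genuinely involves only the restrictions $s|_P$ and $s|_N$, and not $b$ on mixed elements $pn$. This holds because~\eqref{eq:exchange} is an identity in $G_c$ among products of values on $P$ and $N$, while $b(pn)$ cancels. Beyond that, the remaining work is routine bookkeeping of the signs of the characters.
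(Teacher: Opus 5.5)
Your proof is correct and follows the paper's argument. Work in the central extension, note via \eqref{eq:exchange} and the left-inverse property from Proposition~\ref{prop:Mackey} that the bi-$1$-cocycle depends only on $s|_P$ and $s|_N$, and observe that the only freedom for sections homomorphic on $P$ and $N$ is multiplying these restrictions by characters; your version is simply more explicit (e.g.\ the coboundary $b(pn)=\chi_P(p)^{-1}\chi_N(n)^{-1}$), and your hedges about conjugating characters are unnecessary, since $s_1=\chi^{-1}s_2$ yields exactly the stated formula.
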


\bp
Consider the class $c:=[\sigma_{\E_1}]$, the corresponding extension $1\to\T\to G_c\to G\to1$ and a section $t\colon G\to G_c$ defining $\sigma_{\E_1}$. We then have to understand what kind of bi-$1$-cocycles we can get by modifying $t$ in such a way that the corresponding $2$-cocycle is identically $1$ on~$P$ and~$N$, that is, the new section $s$ must still be a homomorphism on $P$ and $N$. From~\eqref{eq:exchange} we see, however, that for the construction of a bi-$1$-cocycle we only need to know the restrictions of~$s$ to~$P$ and~$N$. Therefore the only meaningful choice for $s$ that we can make is to multiply~$t|_P$ and~$t|_N$ by some characters $\chi_P$ and $\chi_N$. This gives the result.
\ep

\begin{remark}\label{rem:Kac}
The bi-$1$-cocycles form a group under multiplication. It is isomorphic (or equal, depending on the conventions) to the first Kac cohomology group $H^1_{\mathrm{Kac}}(P, N;\T)$, see~\cites{Kac,MR2115374}; we are indebted to Arthur Massar for this remark. Kac's cohomology fits into a long exact sequence, and the above considerations reprove exactness of the following part of it:
\[
H^1(P;\T)\oplus H^1(N;\T)\to H^1_{\mathrm{Kac}}(P, N;\T)\to H^2(G;\T)\to H^2(P;\T)\oplus H^2(N;\T).\tag*{$\diamond$}
\]
\end{remark}

\begin{corollary}\label{cor:roots}
Any $\C^\times$-valued bi-$1$-cocycle $\E$ takes values in the group $\mu_m$ of roots of unity of order $m:=|G|\operatorname{lcm}(|P|,|N|)$.
\end{corollary}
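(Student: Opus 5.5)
We want: any $\C^\times$-valued bi-$1$-cocycle $\E$ takes values in $\mu_m$ with $m=|G|\operatorname{lcm}(|P|,|N|)$. The plan is to reduce to the $\T$-valued theory via the cohomological correspondence of Proposition~\ref{prop:Mackey} and Corollary~\ref{cor:cobound}. Those arguments work verbatim for any abelian coefficient group $A$ in place of $\T$; here we take $A=\C^\times$. Note that $\sigma_\E$ given by~\eqref{eq:MackeyE} is a $\C^\times$-valued $2$-cocycle on $G$, because its construction in the last part of Proposition~\ref{prop:Mackey} is purely algebraic.

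First, since $|G|$ annihilates $H^2(G;\C^\times)$, we have $\sigma_\E^{|G|}=\partial b$ for some $b\colon G\to\C^\times$. Then $\sigma_{\E^{|G|}}=\sigma_\E^{|G|}$ is a coboundary. Hence $[\sigma_{\E^{|G|}}]=[\sigma_{1}]$, where $1$ denotes the trivial bi-$1$-cocycle.

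Applying Corollary~\ref{cor:cobound} (with coefficients $\C^\times$), we obtain homomorphisms $\chi_P\colon P\to\C^\times$ and $\chi_N\colon N\to\C^\times$ such that
$$
\E(p,n)^{|G|}=\frac{\chi_P(p^{-1})}{\chi_P(p^{-1}\lhd n)}\frac{\chi_N(n)}{\chi_N(p^{-1}\rhd n)}.
$$
The values of $\chi_P$ are $|P|$-th roots of unity and those of $\chi_N$ are $|N|$-th roots of unity. So the right-hand side lies in $\mu_{\operatorname{lcm}(|P|,|N|)}$, and raising to that power gives $\E(p,n)^m=1$.

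The main point to verify is that Proposition~\ref{prop:Mackey} and Corollary~\ref{cor:cobound} indeed hold over $\C^\times$. For the relevant direction of the corollary, I would argue directly. Take a central extension of $G$ by $\C^\times$ with a section $t$ defining $\sigma_{\E_1}$. Then $t|_P$ and $t|_N$ are homomorphisms, and $\E_1$ is recovered from the exchange relation~\eqref{eq:exchange}. If the classes of $\sigma_{\E_1}$ and $\sigma_{\E_2}$ coincide, we can realize $\sigma_{\E_2}$ in the same extension by another section $t'$. Then $t'|_P$ differs from $t|_P$ by a function $P\to\C^\times$, which must be a character, since both restrictions are homomorphisms into the extension and $\C^\times$ is central; the same holds on $N$. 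Plugging $t'$ into~\eqref{eq:exchange} yields the displayed formula. Nothing in this argument uses unitarity, so it goes through with $\C^\times$ coefficients.
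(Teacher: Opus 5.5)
Your proposal is correct and follows the paper's proof essentially verbatim. You kill $[\sigma_\E^{|G|}]$ using that $|G|$ annihilates $H^2(G;\C^\times)$, apply the $\C^\times$-version of Corollary~\ref{cor:cobound} to $\E^{|G|}$ and the trivial bi-$1$-cocycle, and then use that characters of $P$ and $N$ take values in $\mu_{|P|}$ and $\mu_{|N|}$. Your additional check, that the section-modification argument behind Corollary~\ref{cor:cobound} (together with the recovery identity~\eqref{eq:flip}) uses nothing specific to $\T$, is exactly what the paper means by ``its proof for $\C^\times$ instead of $\T$''.
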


\bp
As is well-known, any cohomology class in $H^2(G;\C^\times)$ has order dividing $|G|$, see~\cite{MR672956}*{Corollary III.10.2}. Therefore $[\sigma_{\E^{|G|}}]=[\sigma_\E^{|G|}]=0$ in~$H^2(G;\C^\times)$. By Corollary~\ref{cor:cobound}, or rather by its proof for $\C^\times$ instead of $\T$, applied to $\E^{|G|}$ and the trivial bi-$1$-cocycle, it follows that there exist homomorphisms $\chi_P\colon P\to\C^\times$ and $\chi_N\colon N\to\C^\times$ such that
$$
\E(p,n)^{|G|}=\frac{\chi_P(p^{-1})}{\chi_P(p^{-1}\lhd n)}\frac{\chi_N(n)}{\chi_N(p^{-1}\rhd n)}.
$$
Since $\chi_P$ and $\chi_N$ take values in the groups $\mu_{|P|}$ and $\mu_{|N|}$, we get the result.
\ep


We will show next that the notion of a bi-$1$-cocycle appears naturally also in the study of dual $2$-cocycles. Consider the group algebra $\C G$ of $G$. It is a C$^*$-algebra with involution $g^*:=g^{-1}$. Denote by $\Delta\colon\C G\to\C G\otimes\C G$ the comultiplication, $\Delta(g):=g\otimes g$.

An invertible element $\Omega\in\C G\otimes\C G$ is called a $2$-cocycle on the dual quantum group $\widehat G$, or a \emph{dual $2$-cocycle} on $G$, if
\begin{equation}\label{eq:dual-2-cocycle}
(\Omega\otimes 1)(\Delta\otimes\iota)(\Omega)=(1\otimes\Omega)(\iota\otimes\Delta)(\Omega).
\end{equation}
We remark that the name \emph{twist} is common in the algebraic literature, and then it is often not~$\Omega$ but $\Omega^{-1}$ that is called a twist.

\begin{proposition}
\label{prop:dual-bi1}
Assume $G=P\bowtie N$ and $\E\colon P\times N\to\C^\times$ is a function. Consider the element
\begin{equation*}
\label{eq:dual-crossed-term}
\Omega:=\sum_{p\in P,n\in N}\E(p^{-1},n^{-1})\,p\otimes n\in\C G\otimes\C G.
\end{equation*}
Then $\Omega$ satisfies the dual cocycle identity~\eqref{eq:dual-2-cocycle} if and only if $\E(e,e)^{-1}\E$ is a bi-$1$-cocycle.
\end{proposition}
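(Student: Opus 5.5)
Both sides of \eqref{eq:dual-2-cocycle} are elements of $\C G^{\otimes3}$, and I will compare their coefficients in the group basis. Write $F(p,n):=\E(p^{-1},n^{-1})$, so that $\Omega=\sum F(p,n)\,p\otimes n$. Then
$$
(\Omega\otimes1)(\Delta\otimes\iota)(\Omega)=\sum F(p_1,n_1)F(p_2,n_2)\,p_1p_2\otimes n_1p_2\otimes n_2,
$$
$$
(1\otimes\Omega)(\iota\otimes\Delta)(\Omega)=\sum F(p_1,n_1)F(p_2,n_2)\,p_2\otimes p_1n_2\otimes n_1n_2.
$$
In the first sum the first leg lies in $P$ and the third in $N$; the middle leg $n_1p_2$ is rewritten as $p'n'$ using the factorization $G=P\bowtie N$, that is, $n_1p_2=(n_1^{-1}\rhd p_2)\cdots$ in the matched pair $(N,P)$. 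Equivalently, $p'=(p_2^{-1}\lhd n_1^{-1})^{-1}$ and $n'=(p_2^{-1}\rhd n_1^{-1})^{-1}$, by inverting $p_2^{-1}n_1^{-1}=(p_2^{-1}\rhd n_1^{-1})(p_2^{-1}\lhd n_1^{-1})$. Each basis element $a\otimes pn\otimes b$ with $a\in P$, $b\in N$ then arises exactly once on each side. On the left $n_1=n'$ up to the dressing bijection, and the relevant data are determined by $(a,p,n,b)$. On the right $p_2=a$, $p_1=p$, $n_2$ is determined by $n$, and $n_1=bn_2^{-1}$. Supports agree automatically, so the identity is equivalent to a single functional equation in four free variables $a,p\in P$, $n,b\in N$.

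The plan is to parametrize both sides by the same quadruple and obtain an equation of the form
$$
F(a p^{\ast\,-1},\,\cdot\,)F(p^\ast,b)=F(p,\,\cdot\,)F(a,n)\qquad(\ast),
$$
where $p^\ast$ is a dressed version of $p$. Carrying this out explicitly is routine bookkeeping with \eqref{eq:dress1} and \eqref{eq:dress2}, and I will then specialize the variables.

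\emph{Necessity.} Setting $b=e$, and separately $a=e$, together with specializations $p=e$ or $n=e$, yields $F(p,e)$ and $F(e,n)$ constant, both equal to $F(e,e)=\E(e,e)$. Dividing by this constant, the remaining one-parameter-family specializations reduce to the two relations in \eqref{eq:b1-cocycle1}, one from the $P$-variables and one from the $N$-variables. In these relations $F$ is expressed back through $\E$, and $p_1^{-1}\rhd n$ and $p\blhd n$ appear once the inverses are unwound.

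\emph{Sufficiency.} Conversely, assume $\E':=\E(e,e)^{-1}\E$ is a bi-$1$-cocycle. The scalar $\E(e,e)$ appears squared on both sides of \eqref{eq:dual-2-cocycle}, so we may assume $\E(e,e)=1$. Then expand both sides of $(\ast)$ using \eqref{eq:b1-cocycle1}: the left side by splitting the product in the $P$-argument, and the right side by splitting the product in the $N$-argument. Both sides should reduce to the same product of three values of $\E$, in the spirit of the flip identity \eqref{eq:flip} in the proof of Proposition~\ref{prop:Mackey}.

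The main obstacle is the bookkeeping. One has to choose the parametrization of the middle leg so that the four-variable identity visibly factors into the two cocycle relations. I would use $\blhd$ and the matched pair $(N,P)$ symmetrically so that the $P$- and $N$-relations appear on equal footing. If the direct approach gets messy, a fallback is available for the sufficiency direction: derive it from Proposition~\ref{prop:Mackey} by identifying $\Omega$ with a twist built from the exchange relation \eqref{eq:exchange}. The necessity direction must, however, come from specialization.
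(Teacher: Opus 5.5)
Your plan is essentially the paper's proof: you compare coefficients of $a\otimes pn\otimes b$ (where simply $n_1=p\rhd n$ and $p_2=p\lhd n$) to get the four-variable identity $F(a(p\lhd n)^{-1},p\rhd n)\,F(p\lhd n,b)=F(p,bn^{-1})\,F(a,n)$, then specialize it for necessity and expand it with \eqref{eq:b1-cocycle1} for sufficiency; the paper does the same with $\Omega^*$ after replacing $\E$ by $\bar\E$. The one step to make explicit is that in your parametrization the normalized case $a=b=e$ is exactly \eqref{eq:flip} (with $p$ replaced by $p^{-1}$), and you need it both to extract the two cocycle relations from $b=e$ and $a=e$ and to match the two three-term products in the converse, whereas the paper's substitution into the symmetric form \eqref{ECond} lets $n=e$ and $p'=e$ give the relations directly without \eqref{eq:flip}.
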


We stress that $\Omega$ is not invertible without further restrictions on $\E$, and so it is not a dual $2$-cocycle in general.

\begin{proof}
Since~\eqref{eq:dual-2-cocycle} is stable under multiplication of $\Omega$ by a scalar, we may assume that $\E(e,e)=1$. Since $\E$ is a bi-$1$-cocycle if and only if $\bar\E$ is, we can also replace $\E$ by $\bar\E$ in the proof, so that the adjoint element $\Omega^*$ becomes equal to $\sum_{p,n}\E(p,n)\,p\otimes n$. We need to show that $\E$ is a bi-$1$-cocycle if and only if
\begin{equation}\label{eq:dual-2-cocycle2}
(\Delta\otimes\iota)(\Omega^*)(\Omega^*\otimes 1)=(\iota\otimes\Delta)(\Omega^*)(1\otimes\Omega^*),
\end{equation}
that is,
$$
\sum_{p,p',n,n'} \E(p,n)\E(p^{-1}p',n')\,p'\otimes pn'\otimes n=
\sum_{p,p',n,n'} \E(p',n')\E(p,n'^{-1}n)\,p'\otimes n'p\otimes n.
$$
If we write $n'p$ as $\tilde p\tilde n$, then $n'=\tilde p\rhd\tilde n$ and $p=\tilde p\lhd\tilde n$. Hence the right hand side of the above equation can be written as
$$
\sum_{p,p',n,n'} \E(p',p\rhd n' )\E(p\lhd n',(p\rhd n')^{-1}n\big)\, p'\otimes pn'\otimes n.
$$
Therefore $\Omega^*$ satisfies~\eqref{eq:dual-2-cocycle2} if and only if $\E$ satisfies
$$
\E(p,n)\E(p^{-1}p',n')=\E(p',p\rhd n' )\E(p\lhd n',(p\rhd n')^{-1}n\big),
$$
or equivalently, in a more symmetric form,
\begin{equation}
\label{ECond}
\E(p,n'n)\E(p',p^{-1} \rhd n')=\E(pp',n')\E((p^{-1}\lhd n')^{-1},n),
\end{equation}
where we have used that $p\lhd(p^{-1}\rhd n')=(p^{-1}\lhd n')^{-1}$ by~\eqref{eq:dress2}.

Now, if $\E$ is a bi-$1$-cocycle, then the cocycle relations~\eqref{eq:b1-cocycle1} imply that both sides of~\eqref{ECond} equal
$$
\E(p,n')\E((p^{-1}\lhd n')^{-1},n)\E(p',p^{-1} \rhd n').
$$
Conversely, assume~\eqref{ECond} is satisfied.
Evaluating it at  $p'=e$ and $n'=e$ gives
$\E(p,e)=1$, since $\E(e,e)=1$ by assumption, and evaluating at $p=e$ and $n=e$ gives
$\E(e,n')=1$.
The two bi-$1$-cocycle relations follow then by evaluating \eqref{ECond} at  $n=e$ for the first one and at  $p'=e$ for the second.
\end{proof}

\bigskip

\section{Facets of nondegeneracy}

We continue to assume that $G$ is a finite group. By a projective unitary representation of~$G$ on a finite-dimensional Hilbert space $H$ one means either a homomorphism $G\to PU(H)$ or a map $\pi\colon G\to U(H)$ that is a homomorphism modulo scalar operators. For us it will be convenient to adopt the second point of view. By a projective intertwiner between two projective representations $\pi_1\colon G\to U(H_1)$ and $\pi_2\colon G\to U(H_2)$ one means then a linear operator $T\colon H_1\to H_2$ such that $T\pi_1(g)=c(g)\pi_2(g)T$ for some scalars $c(g)\in\C^\times$. If there exists a unitary projective intertwiner, then $\pi_1$ and $\pi_2$ are called projectively unitarily equivalent.

Given a projective unitary representation $\pi\colon G\to U(H)$, the corresponding cocycle $\sigma\in Z^2(G;\T)$ is defined by the identity
$$
\pi(g)\pi(h)=\sigma(g,h)\pi(gh).
$$
Then one says also that $\pi$ is a $\sigma$-representation. Once $\sigma$ is fixed, it makes sense to talk about ordinary intertwiners, when we require $T\pi_1(g)=\pi_2(g)T$,  and unitary equivalences between $\sigma$-representations. Two $\sigma$-representations $\pi_1$ and $\pi_2$ are projectively unitarily equivalent if and only if $\pi_1$
and $\chi\pi_2$ are unitarily equivalent for some $\chi\in\Hom(G,\T)$.

The twisted group algebra $\C^\sigma G$ is defined as the space $\C G$ equipped with the product $g\cdot_\sigma h:=\sigma(g,h)gh$. It is a finite-dimensional C$^*$-algebra with unit $\overline{\sigma(e,e)}e$ and involution
$$
g^{*_\sigma}:=\overline{\sigma(g^{-1},g)\sigma(e,e)}g^{-1}.
$$
We have an obvious one-to-one correspondence between unitary $\sigma$-representations of $G$ and unital $*$-representations of $\C^\sigma G$. As $\C^\sigma G$ is semisimple, it follows then that a nonzero $\sigma$-representation $\pi\colon G\to U(H)$ is irreducible if and only if the corresponding representation $\C^\sigma G\to B(H)$ is surjective.

A cocycle $\sigma\in Z^2(G;\T)$ is called \emph{nondegenerate} if the algebra $\C^\sigma G$ is simple, equivalently, it is isomorphic  as a C$^*$-algebra to the algebra of $|G|^{1/2}\times|G|^{1/2}$ matrices; in particular, $|G|$ must be a square. Up to an isomorphism, the algebra $\C^\sigma G$ depends only on the cohomology class of $\sigma$, and as a consequence nondegeneracy is a property of the class $[\sigma]\in H^2(G;\T)$. Since full matrix algebras have a very simple representation theory, one can formulate the following characterization of nondegeneracy.

\begin{lemma}\label{lem:Nondeg}
For any $\sigma\in Z^2(G;\T)$, the following conditions are equivalent:
\begin{enumerate}
  \item the cocycle $\sigma$ is nondegenerate;
  \item there is an irreducible unitary $\sigma$-representation $\pi$ of dimension $|G|^{1/2}$;
  \item the smallest nonzero dimension for which there exists a unitary $\sigma$-representation is $|G|^{1/2}$.
\end{enumerate}
Moreover, if these conditions are satisfied, then any unitary $\sigma$-representation is unitarily equivalent to a multiple of~$\pi$.
\end{lemma}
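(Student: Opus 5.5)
The plan is to use the structure theory of finite-dimensional C$^*$-algebras for $\C^\sigma G$, together with the correspondence between unitary $\sigma$-representations and unital $*$-representations of $\C^\sigma G$. Since $\C^\sigma G$ is a finite-dimensional C$^*$-algebra, Wedderburn's theorem gives $\C^\sigma G\cong\bigoplus_{i=1}^k M_{d_i}(\C)$ with $\sum_i d_i^2=\dim\C^\sigma G=|G|$. The irreducible unital $*$-representations are, up to unitary equivalence, the projections onto the summands, of dimensions $d_1,\dots,d_k$. Every representation decomposes into irreducibles, so the nonzero dimensions of $\sigma$-representations are exactly the nonnegative integer combinations of the $d_i$, excluding $0$. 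The whole lemma then reduces to elementary arithmetic on the $d_i$.

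\textbf{(1)$\Rightarrow$(2), (3) and the final claim.} If $\C^\sigma G$ is simple, then $k=1$ and $d_1^2=|G|$. There is then a unique irreducible representation $\pi$, of dimension $|G|^{1/2}$. Every unitary $\sigma$-representation is a direct sum of irreducibles, hence unitarily equivalent to a multiple of $\pi$. In particular every nonzero $\sigma$-representation has dimension a positive multiple of $|G|^{1/2}$, which gives (3).

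\textbf{(2)$\Rightarrow$(1).} Suppose $\pi$ is irreducible of dimension $d=|G|^{1/2}$. Then $d=d_i$ for some $i$, so $d_i^2=|G|=\sum_j d_j^2$. This forces every other summand to vanish, so $k=1$ and the algebra is simple.

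\textbf{(3)$\Rightarrow$(1).} The smallest nonzero dimension of a $\sigma$-representation is $\min_i d_i$. If this equals $|G|^{1/2}$, then $d_i^2\ge |G|$ for every $i$. Combined with $\sum_i d_i^2=|G|$, this again forces $k=1$.

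One point needs checking: the translation between unitary $\sigma$-representations and unital $*$-representations of $\C^\sigma G$, in both directions. Given a unitary $\sigma$-representation $\pi$, the linear extension of $g\mapsto\pi(g)$ is multiplicative for $\cdot_\sigma$. It is unital because $\pi(e)=\sigma(e,e)1$, and it respects the stated involution because $\pi(g)^*=\pi(g)^{-1}=\overline{\sigma(g^{-1},g)\sigma(e,e)}\,\pi(g^{-1})$. Conversely, in a unital $*$-representation each $g$ is sent to a unitary, since $g$ is unitary in $\C^\sigma G$. The paper already asserts this correspondence, so there is no real obstacle; the only substantive input is Wedderburn's theorem together with the dimension count $\sum_i d_i^2=|G|$.
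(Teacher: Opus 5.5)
Your proposal is correct and is essentially the argument the paper has in mind. The paper gives no separate proof; it only notes that $\C^\sigma G$ is a finite-dimensional C$^*$-algebra of dimension $|G|$ whose unital $*$-representations correspond to unitary $\sigma$-representations and that full matrix algebras have a simple representation theory, and your Wedderburn decomposition with the count $\sum_i d_i^2=|G|$, together with your check of that correspondence, spells this out accurately.
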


Assume now that $G=P\bowtie N$. Denote by $\lambda^G,\lambda^P,\lambda^N$ the left regular representations of $G,P,N$. The left actions of $G$ on $G/P$ and $G/N$ define quasi-regular representations
$$
\lambda^{G/P}\colon G\to U(\ell^2(G/P)),\qquad \lambda^{G/N}\colon G\to U(\ell^2(G/N)).
$$
We identify the sets $G/P$ and $G/N$ with $N$ and $P$, resp. Then obviously
$$
\lambda^{G/P}_n=\lambda^N_n\quad (n\in N),\qquad \lambda^{G/N}_p=\lambda^P_p\quad (p\in P).
$$
On the other hand, the operators
$$
U_p:=\lambda^{G/P}_p\in U(\ell^2(N)),\qquad V_n:=\lambda^{G/N}_n\in U(\ell^2(P)),
$$
are defined by dressing transformations:
$$
(U_pf)(n)=f(p^{-1}\rhd n)\quad (f\in\ell^2(N)),\qquad (V_nf)(p)=f(p\blhd n)\quad (f\in\ell^2(P)).
$$
Since $\lambda^{G/N}$ and $\lambda^{G/P}$ are representations, we have the following relations:
\begin{equation}\label{eq:exchange-op}
\lambda^P_pV_n=V_{p\rhd n}\lambda^P_{p\lhd n},\qquad U_p\lambda^N_n=\lambda^N_{p\rhd n}U_{p\lhd n}.
\end{equation}

Now, fix a bi-$1$-cocycle $\E\colon P\times N\to\T$. We then twist our quasi-regular representations to get two Schr\"odinger-type projective representations
$$
\pi^P\colon G\to U(\ell^2(P)),\qquad \pi^N\colon G\to U(\ell^2(N))
$$
by letting
$$
\pi^P(pn):=\lambda^P_p\overline{\E(\cdot,n)}V_n,\qquad \pi^N(pn):=\E(p,\cdot)U_p\lambda^N_n,
$$
where $\overline{\E(\cdot,n)}$ is viewed as the operator of multiplication by this function, and similarly for $\E(p,\cdot)$.

\begin{lemma}
Both $\pi^P$ and $\pi^N$ are unitary $\sigma_\E$-representations.
\end{lemma}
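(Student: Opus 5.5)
We need to show $\pi^N(g)\pi^N(h)=\sigma_\E(g,h)\pi^N(gh)$, and the same for $\pi^P$. Unitarity is clear: each $\pi^N(pn)$ is a product of a unimodular multiplication operator and two permutation unitaries. The approach is to reduce everything to two exchange relations, one between $\pi^N|_P$ and $\pi^N|_N$, and then assemble the general product using the fact that $\sigma_\E(p_1n_1,p_2n_2)=\E(p_2,n_1^{-1})$ depends only on $n_1$ and $p_2$.

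\textbf{Step 1: the restrictions are honest representations.} First I would check that $p\mapsto\E(p,\cdot)U_p$ is a representation of $P$ on $\ell^2(N)$. We have
$$
\E(p_1,\cdot)U_{p_1}\E(p_2,\cdot)U_{p_2}=\E(p_1,\cdot)\E(p_2,p_1^{-1}\rhd\cdot)\,U_{p_1p_2}.
$$
By the first identity in~\eqref{eq:b1-cocycle1}, the prefactor equals $\E(p_1p_2,\cdot)$. The restriction to $N$ is just $\lambda^N$, so it is a representation trivially.

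\textbf{Step 2: the exchange relation.} For $g=pn$ we have $\pi^N(g)=\pi^N(p)\pi^N(n)$. The key identity to establish is
\begin{equation*}
\pi^N(n)\pi^N(p)=\E(p,n^{-1})\,\pi^N(n\rhd' p)\,\pi^N(n\lhd' p),
\end{equation*}
where $np=(n\rhd' p)(n\lhd' p)$ is the factorization with the $P$-part written first. Writing $p'=n\rhd' p$ and $n'=n\lhd' p$, we have $p'n'=np$, so $p'^{-1}n=n'p^{-1}$. This gives
$$
p'^{-1}\rhd n=n'\quad\text{and}\quad p'^{-1}\lhd n=p^{-1}.
$$
Hence the factorization exchange relation~\eqref{eq:exchange-op} gives $\lambda^N_nU_p=U_{p'}\lambda^N_{n'}$. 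What remains is to compare the multiplication operators: $\lambda^N_n\E(p,\cdot)\lambda^N_{n}^{*}$ is multiplication by $\E(p,n^{-1}\,\cdot)$, and this must be matched against $\E(p,n^{-1})\E(p',\cdot)$ after moving everything through the permutation operators. Using the second identity in~\eqref{eq:b1-cocycle1} on $\E(p,n^{-1}m)$ together with the flip identity~\eqref{eq:flip}, this should reduce to $\E(p',\cdot)$ evaluated at the correct point. I expect this bookkeeping of dressing actions, via~\eqref{eq:dress1}--\eqref{eq:dress2}, to be the main obstacle. If it becomes messy, the alternative is to identify $\pi^N$ with the representation on $\ell^2(G/P)$ induced from the section $t(pn)=s(p)s(n)$ used in the proof of Proposition~\ref{prop:Mackey}. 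In that picture~\eqref{eq:exchange} directly produces the factor $\E$.

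\textbf{Step 3: assembly.} Given Step 2,
$$
\pi^N(p_1n_1)\pi^N(p_2n_2)=\pi^N(p_1)\big[\pi^N(n_1)\pi^N(p_2)\big]\pi^N(n_2)=\E(p_2,n_1^{-1})\,\pi^N(p_1p')\,\pi^N(n'n_2)
$$
by Step 1, where $p'=n_1\rhd' p_2$ and $n'=n_1\lhd' p_2$. Since $p_1p'n'n_2=p_1n_1p_2n_2$, the right-hand side equals $\sigma_\E(g,h)\pi^N(gh)$, which proves the claim for $\pi^N$.

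\textbf{The case of $\pi^P$.} Here I would argue symmetrically. The restriction $\pi^P|_N\colon n\mapsto\overline{\E(\cdot,n)}V_n$ is a representation by the second cocycle identity written with $\blhd$, since $V_{n_1}V_{n_2}=V_{n_1n_2}$ and $V_{n_1}$ shifts the argument by $\blhd n_1$. The restriction to $P$ is $\lambda^P$. The exchange relation between $\pi^P(n)$ and $\pi^P(p)$ is then verified as in Step 2, now with the first relation in~\eqref{eq:exchange-op}.
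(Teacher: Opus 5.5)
Your strategy is the paper's. The restrictions to $P$ and $N$ are honest representations, a single exchange relation between $\pi(n)$ and $\pi(p)$ is checked, and the general product is assembled as in~\eqref{eq:cocycle-computation}. Your relation $\pi(n)\pi(p)=\E(p,n^{-1})\pi(p')\pi(n')$ is the inverted form of the paper's identity $\E(p,n)\pi(p^{-1})\pi(n)=\pi(p^{-1}\rhd n)\pi(p^{-1}\lhd n)$, and your Steps~1 and~3 are correct.

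What you have not done is the verification in Step~2, which is the only real content. It does go through, but not quite as you predict.

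Inverting $p^{-1}n^{-1}=(p^{-1}\rhd n^{-1})(p^{-1}\lhd n^{-1})$ gives $p'=(p^{-1}\lhd n^{-1})^{-1}=p\blhd n^{-1}$ and $n'=(p^{-1}\rhd n^{-1})^{-1}$.

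For $\pi^N$, the needed identity $\E(p,n^{-1}m)=\E(p,n^{-1})\E(p',m)$ is then literally the second relation in~\eqref{eq:b1-cocycle1} with $n_1=n^{-1}$, $n_2=m$. So~\eqref{eq:flip} is not needed there.

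The flip identity is needed instead for $\pi^P$, which you dismiss with ``as in Step~2''. Using $V_n\lambda^P_p=\lambda^P_{p'}V_{n'}$ and $\lambda^P_{p'}\overline{\E(\cdot,n')}=\overline{\E(p'^{-1}\cdot,n')}\lambda^P_{p'}$, one needs $\E(p'^{-1}q,n')=\E(p,n^{-1})\E(q,n)$ for all $q\in P$. The first cocycle relation together with $p'\rhd n'=n$ reduces this to $\E(p'^{-1},n')=\E(p,n^{-1})$. That is~\eqref{eq:flip} with $n$ replaced by $n^{-1}$.

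In the paper's formulation the roles are reversed: $\pi^P$ needs only the first cocycle relation, while $\pi^N$ needs the flip. This reflects that~\eqref{eq:flip} is exactly what converts one form of the exchange relation into the other.

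With these lines added your argument is complete, and the fallback via the section $t$ is unnecessary.
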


\bp
If a function $f$ on $P$ is viewed as an operator of multiplication by $f$ on $\ell^2(P)$, then $V_nf=f(\cdot\blhd n)V_n$. From this and the cocycle relations one sees immediately that $\pi^P|_N$ is a homomorphism. In a similar way one sees that $\pi^N|_P$ is a homomorphism. It follows that in order to prove the lemma we only need to check the identity
$$
\E(p,n)\pi(p^{-1})\pi(n)=\pi(p^{-1}\rhd n)\pi(p^{-1}\lhd n)
$$
for $\pi=\pi^P$ and $\pi=\pi^N$, since the property $\pi(pn)=\pi(p)\pi(n)$ and  the same computation as in~\eqref{eq:cocycle-computation} show then that $\pi$ is a $\sigma_\E$-representation.

Consider first the representation $\pi^P$. We have
$$
\E(p,n)\pi^P(p^{-1})\pi^P(n)=\E(p,n)\lambda^P_{p^{-1}}\overline{\E(\cdot,n)}V_n=\E(p,n)\overline{\E(p\,\cdot, n)}\lambda^P_{p^{-1}}V_n,
$$
$$
\pi^P(p^{-1}\rhd n)\pi^P(p^{-1}\lhd n)=\overline{\E(\cdot, p^{-1}\rhd n)}V_{p^{-1}\rhd n}\lambda^P_{p^{-1}\lhd n}.
$$
Since $\lambda^P_{p^{-1}}V_n=V_{p^{-1}\rhd n}\lambda^P_{p^{-1}\lhd n}$ by~\eqref{eq:exchange-op}, we thus need to show that
$$
\E(p,n)\overline{\E(pp', n)}=\overline{\E(p', p^{-1}\rhd n)}.
$$
This is just the first cocycle identity in~\eqref{eq:b1-cocycle1}.

Consider now the representation $\pi^N$. In this case we need to check the identity
$$
\E(p,n)\E(p^{-1}, n')=\E(p^{-1}\lhd n, (p^{-1}\rhd n)^{-1}n').
$$
The right hand side, by the second cocycle identity in~\eqref{eq:b1-cocycle1}, equals
$$
\E(p^{-1}\lhd n, (p^{-1}\rhd n)^{-1})\E\Big(\big((p^{-1}\lhd n)^{-1}\lhd (p^{-1}\rhd n)^{-1}\big)^{-1},n'\Big).
$$
In this product, the first factor equals $\E(p,n)$ by~\eqref{eq:flip}. Since $(p^{-1}\lhd n)^{-1}=p\lhd(p^{-1}\rhd n)$ by~\eqref{eq:dress2}, the first argument in the second factor is $p^{-1}$, so this factor equals $\E(p^{-1},n')$, as needed.
\ep

Our next goal is to construct certain intertwiners. The key player is the Fourier transform-type operator $\F$ with matrix $(\overline{\E(p,n)})_{p\in P,n\in N}$:
\begin{equation}\label{eq:Fourier}
\F\colon\ell^2(N)\to\ell^2(P),\quad (\F f)(p):=\sum_{n\in N}\overline{\E(p,n)}f(n).
\end{equation}

\begin{lemma}\label{lem:Fourier1}
We have $\F\pi^N(g)=\pi^P(g)\F$ for all $g\in G$.
\end{lemma}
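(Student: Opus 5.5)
Since every $g\in G$ factors uniquely as $g=pn$, and both $\pi^P$ and $\pi^N$ satisfy $\pi(pn)=\pi(p)\pi(n)$ by definition, it suffices to verify the intertwining relation separately for $g=p\in P$ and for $g=n\in N$. On these subgroups the operators are simple. On $P$ we have $\pi^P(p)=\lambda^P_p$ and $\pi^N(p)=\E(p,\cdot)U_p$, because $\E(e,\cdot)=1$ and $V_e=1$. On $N$ we have $\pi^P(n)=\overline{\E(\cdot,n)}V_n$ and $\pi^N(n)=\lambda^N_n$, because $\E(e,\cdot)=1$ and $U_e=1$. So we need two identities, $\F\,\E(p,\cdot)U_p=\lambda^P_p\F$ and $\F\lambda^N_n=\overline{\E(\cdot,n)}V_n\F$. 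Both should reduce, after a change of summation variable, to the two bi-$1$-cocycle identities \eqref{eq:b1-cocycle1}.

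For $g=p$: we have $(\lambda^P_p\F f)(q)=\sum_{m}\overline{\E(p^{-1}q,m)}f(m)$. On the other side,
$$
(\F\,\E(p,\cdot)U_pf)(q)=\sum_{n}\overline{\E(q,n)}\E(p,n)f(p^{-1}\rhd n).
$$
Substituting $m=p^{-1}\rhd n$, a bijection of $N$, we need $\overline{\E(q,n)}\E(p,n)=\overline{\E(p^{-1}q,p^{-1}\rhd n)}$. This is exactly the first identity in \eqref{eq:b1-cocycle1} with $p_1=p$, $p_2=p^{-1}q$.

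For $g=n$: we have $(\F\lambda^N_nf)(q)=\sum_m\overline{\E(q,m)}f(n^{-1}m)=\sum_{m'}\overline{\E(q,nm')}f(m')$. On the other side,
$$
(\overline{\E(\cdot,n)}V_n\F f)(q)=\overline{\E(q,n)}\sum_{m'}\overline{\E(q\blhd n,m')}f(m').
$$
These agree by the second cocycle identity written with $\blhd$, namely $\E(q,nm')=\E(q,n)\E(q\blhd n,m')$.

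No real obstacle is expected. The only care needed is with the conventions: use that $(U_pf)(n)=f(p^{-1}\rhd n)$, and that $p^{-1}\rhd$ is a bijection of $N$, so the reindexing is legitimate.
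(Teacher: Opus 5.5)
Your proof is correct and follows the paper's argument: you reduce to $g\in P$ and $g\in N$ using $\pi(pn)=\pi(p)\pi(n)$, then verify each case by reindexing the sum and applying the corresponding bi-$1$-cocycle identity. The paper phrases the $P$-case as $\lambda^P_{p^{-1}}\F\pi^N(p)=\F$ and omits the $N$-case, which you carry out correctly. One small slip: for $\pi^P(p)=\lambda^P_p$ the relevant normalization is $\E(\cdot,e)=1$, not $\E(e,\cdot)=1$.
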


\bp
It suffices to check this on the elements of $P$ and $N$. Consider first $g=p\in P$. In this case it is more convenient to check that $\lambda^P_{p^{-1}}\F\pi^N(p)=\F$. We compute:
\begin{align*}
(\lambda^P_{p^{-1}}\F\pi^N(p)f)(p') & =\sum_n\overline{\E(pp',n)}\E(p,n)f(p^{-1}\rhd n) \\
   & =\sum_n\overline{\E(p',p^{-1}\rhd n)}f(p^{-1}\rhd n)=(\F f)(p').
\end{align*}
For $g=n\in N$, the verification of the identity $\F\lambda^N_n=\pi^P(n)\F$ is equally straightforward and we omit it.
\ep

We will denote by one letter $J$ the complex conjugation on various function spaces. The contragredient representation to $\pi^P$ is defined by
$$
\bar\pi^P\colon G\to U(\ell^2(P)),\quad \bar\pi^P(g):=J\pi^P(g)J.
$$
It is clear that this is a $\sigma_{\bar\E}$-representation. In fact, we see from the definitions that this representation is defined in exactly the same way as~$\pi^P$, but with $\E$ replaced by $\bar\E$.

The representation $\pi^P\otimes\bar\pi^P$ is an honest representation of $G$. Our next goal is to construct an intertwiner between this representation and the regular representation. For this we need to introduce a couple of auxiliary operators. Recall first that the \emph{multiplicative unitary} of $P$ is the unitary operator $W_P$ on $\ell^2(P)\otimes\ell^2(P)=\ell^2(P\times P)$ defined by
$$
(W_Pf)(p_1,p_2):=f(p_1,p_1^{-1}p_2).
$$
It has the property
\begin{equation}\label{eq:mult-implementation}
W_P(\lambda^P_p\otimes 1)=(\lambda^P_p\otimes \lambda^P_p)W_P.
\end{equation}
We also define a unitary operator
$$
T\colon\ell^2(G)\to\ell^2(P)\otimes\ell^2(N)\quad\text{by}\quad (Tf)(p,n):=f(pn).
$$

We now define an operator
\begin{equation}\label{eq:K-def}
K\colon \ell^2(G)\to\ell^2(P)\otimes\ell^2(P)\quad\text{by}\quad K:=W_P(1\otimes\bar\F)T,
\end{equation}
where $\bar\F:=J\F J$, or in other words, $\bar\F$ is defined in the same way as $\F$ but using $\bar\E$ instead.
The definition of $K$ is inspired by the Kohn--Nirenberg quantization, cf.~\cite{BGNT5}*{Definition~2.13}.

\begin{lemma}\label{lem:Fourier2}
We have $K\lambda^G_g=(\pi^P(g)\otimes\bar\pi^P(g))K$ for all $g\in G$.
\end{lemma}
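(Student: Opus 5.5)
Since $G=PN$ and both sides of the identity are multiplicative in $g$ (the left side because $\lambda^G$ is a representation, the right side because $\pi^P\otimes\bar\pi^P$ is an honest representation, the cocycles $\sigma_\E$ and $\sigma_{\bar\E}=\bar\sigma_\E$ cancelling), it suffices to verify the relation separately for $g=p\in P$ and for $g=n\in N$. I will first record how $T$ transforms the left regular representation. For $p\in P$ we clearly have $T\lambda^G_p=(\lambda^P_p\otimes1)T$. For $n\in N$ we have $(T\lambda^G_nf)(p,n')=f(n^{-1}pn')$, and writing $n^{-1}p=(n^{-1}\rhd p)\,(p\blhd n)$, that is, using the factorization $N\bowtie P$ with $p\blhd n=n^{-1}\rhd p$ as noted in Section~1, we get $n^{-1}pn'=(n^{-1}\rhd p)\cdot\big((p\blhd n)$-twisted element of $N)$. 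Since $\blhd$ is a right action by permutations of $P$, this gives
$$
T\lambda^G_n=(V_n\otimes1)\,\widetilde U_n\,T,
$$
where $\widetilde U_n$ acts on $\ell^2(P\times N)$ by translating the $N$-variable by an element depending on $p$. Equivalently, in terms of $\pi^N$, the fibre over each $p$ transforms by a left translation $\lambda^N$ of $\ell^2(N)$.

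\emph{Case $g=p$.} Here $K\lambda^G_p=W_P(1\otimes\bar\F)(\lambda^P_p\otimes1)T=W_P(\lambda^P_p\otimes1)(1\otimes\bar\F)T=(\lambda^P_p\otimes\lambda^P_p)K$ by \eqref{eq:mult-implementation}. We have $\pi^P(p)=\lambda^P_p$, and also $\bar\pi^P(p)=\lambda^P_p$, since on $P$ no $\E$-factor appears. So this case is immediate.

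\emph{Case $g=n$.} This is the main computation. We need
$$
W_P(1\otimes\bar\F)T\lambda^G_n=\big(\overline{\E(\cdot,n)}V_n\otimes\E(\cdot,n)V_n\big)W_P(1\otimes\bar\F)T.
$$
I would compute both sides on a function $f\in\ell^2(G)$ at a point $(p_1,p_2)$ directly. The left side is
$$
\sum_{n'}\E(p_1^{-1}p_2,n')\,f\big(n^{-1}p_1n'\big),
$$
and I will rewrite $n^{-1}p_1=(p_1\blhd n)'\,$-factorization as $q\,m$ with $q=n^{-1}\rhd p_1$ and $m\in N$, then substitute $n''=mn'$. The right side equals
$$
\overline{\E(p_1,n)}\,\E(p_2,n)\sum_{n''}\E\big((p_1\blhd n)^{-1}(p_2\blhd n),n''\big)f\big((p_1\blhd n)n''\big).
$$
Matching the two then reduces to a scalar identity for $\E$: namely $\E(p_1^{-1}p_2,m^{-1}n'')$ should be expressed through $\E(p_1,n)$, $\E(p_2,n)$ and $\E((p_1\blhd n)^{-1}(p_2\blhd n),n'')$. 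I would derive this by expanding with the second bi-$1$-cocycle identity in $N$ (splitting $m^{-1}n''$), the first bi-$1$-cocycle identity in $P$ (splitting $p_1^{-1}p_2$), and the flip identity \eqref{eq:flip} together with \eqref{eq:dress1}--\eqref{eq:dress2} to identify the dressed arguments. A cleaner route would be to use Lemma~\ref{lem:Fourier1} fibrewise: $\bar\F$ intertwines $\bar\pi^N$ and $\bar\pi^P$, which absorbs the $N$-translation into $\bar\pi^P(n)$ on the second leg, and then to check that $W_P$ converts $(V_n\otimes\bar\pi^P(n))$ twisted by the phase into $\pi^P(n)\otimes\bar\pi^P(n)$.

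I expect this bookkeeping of the dressing actions in the case $g=n$ to be the main obstacle. In particular, one has to verify that the phase produced by the exchange $n^{-1}p_1$ matches exactly $\overline{\E(p_1,n)}\E(p_2,n)$ after applying $W_P$. I would try the fibrewise route via Lemma~\ref{lem:Fourier1} first, and fall back on the direct cocycle expansion if that does not close.
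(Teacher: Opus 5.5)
Your reduction to $g\in P$ and $g\in N$ and your treatment of $g=p$ coincide with the paper. Your explicit formulas for both sides when $g=n$ are also correct. But you stop exactly where the content of the lemma lies: the scalar identity is only announced, not proved. It does close, in a few lines, once two concrete facts are in place.

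First, the factorization you need is not $n^{-1}p=(n^{-1}\rhd p)(p\blhd n)$, since both factors there lie in $P$. Inverting $p^{-1}n=(p^{-1}\rhd n)(p^{-1}\lhd n)$ gives instead
\[
n^{-1}p_1=(p_1\blhd n)\,k^{-1},\qquad k:=p_1^{-1}\rhd n .
\]
So $q=p_1\blhd n$ and $m=k^{-1}$, and the identity to prove is
\[
\E(p_1^{-1}p_2,kn'')=\overline{\E(p_1,n)}\,\E(p_2,n)\,\E\big((p_1\blhd n)^{-1}(p_2\blhd n),n''\big).
\]
Second, the first relation in \eqref{eq:dress1}, rewritten for $\blhd$, reads $(ab)\blhd n=(a\blhd n)\big(b\blhd(a^{-1}\rhd n)\big)$. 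Hence $(p_1^{-1}p_2)\blhd k=(p_1\blhd n)^{-1}(p_2\blhd n)$.

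With these, the second cocycle identity splits the left side as $\E(p_1^{-1}p_2,k)\,\E\big((p_1^{-1}p_2)\blhd k,n''\big)$. The first cocycle identity applied to $p_2=p_1\cdot p_1^{-1}p_2$ gives $\E(p_2,n)=\E(p_1,n)\,\E(p_1^{-1}p_2,k)$, which finishes the proof; \eqref{eq:flip} is not needed. One further correction to your sketched fibrewise route: the $N$-translation in the fibre over $p_1$ is by $k=p_1^{-1}\rhd n$, which depends on $p_1$. It is therefore absorbed by $\bar\pi^P(k)$, not by $\bar\pi^P(n)$.

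The paper takes essentially that fibrewise route. It uses the same dressing identity and rewrites $V_k=\overline{\E(\cdot,k)}\,\bar\pi^P(k)$. It then moves the translation through $\bar\F$ via Lemma~\ref{lem:Fourier1}, i.e.\ $\bar\pi^P(k)\bar\F=\bar\F\lambda^N_k$, and cancels phases using $\E(p_1^{-1},k)=\overline{\E(p_1,n)}$. The $N$-part of Lemma~\ref{lem:Fourier1} is itself just the second cocycle identity, so the two arguments are the same computation packaged differently. Your direct route works with the explicit kernel $\E(p,n)$ of $\bar\F$ and is slightly shorter in the finite setting. The paper's route uses only the intertwining property of $\bar\F$, not its matrix, as the authors remark after the proof; that is what lets it extend to the infinite-dimensional settings they have in mind.
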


\bp
We will check this separately on the elements of $P$ and $N$. Consider first $g=p\in P$. By definition and identity~\eqref{eq:mult-implementation}, we have
$$
(\pi^P(p)\otimes\bar\pi^P(p))K=(\lambda^P_p\otimes\lambda^P_p)W_P(1\otimes\bar\F)T=W_P(1\otimes\bar\F)(\lambda^P_p\otimes1)T,
$$
which proves what we need, as we obviously have $(\lambda^P_p\otimes1)T=T\lambda^G_p$.

Consider now $g=n\in N$. By definition,
\begin{align*}
  \big((\pi^P(n)\otimes\bar\pi^P(n))Kf\big)(p_1,p_2) & =\overline{\E(p_1,n)}\E(p_2,n)\big(W_P(1\otimes\bar\F)Tf\big)(p_1\blhd n,p_2\blhd n) \\
  & =\overline{\E(p_1,n)}\E(p_2,n)\big((1\otimes\bar\F)Tf\big)\big(p_1\blhd n,(p_1\blhd n)^{-1}(p_2\blhd n)\big).
\end{align*}
By \eqref{eq:dress1} and \eqref{eq:dress2}, we have $(p_1^{-1}p_2)\blhd(p_1^{-1}\rhd n)=(p_1\blhd n)^{-1}(p_2\blhd n)$. Therefore we can write the above expression as
$$
\overline{\E(p_1,n)}\E(p_2,n)\big((1\otimes\bar\F)Tf\big)\big(p_1\blhd n,(p_1^{-1}p_2)\blhd(p_1^{-1}\rhd n)\big),
$$
which, in turn, can be written as
$$
\overline{\E(p_1,n)}\big((1\otimes \E(\cdot,n)\lambda^P_{p_1}V_{p_1^{-1}\rhd n}\bar\F)Tf\big)(p_1\blhd n,p_2).
$$
Since $V_{p_1^{-1}\rhd n}=\overline{\E(\cdot,p_1^{-1}\rhd n)}\bar\pi^P(p_1^{-1}\rhd n)$ by definition and $\bar\pi^P(p_1^{-1}\rhd n)\bar\F=\bar\F\lambda^N_{p_1^{-1}\rhd n}$ by Lemma~\ref{lem:Fourier1},  the above expression equals
\begin{equation}\label{eq:K1}
\overline{\E(p_1,n)}\big((1\otimes\E(\cdot,n)\lambda^P_{p_1}\overline{\E(\cdot,p_1^{-1}\rhd n)}\bar\F\lambda^N_{p_1^{-1}\rhd n})Tf\big)(p_1\blhd n,p_2).
\end{equation}
Next, we have
$$
\lambda^P_{p_1}\overline{\E(\cdot,p_1^{-1}\rhd n)}=\overline{\E(p_1^{-1}\cdot,p_1^{-1}\rhd n)}\lambda^P_{p_1}=\overline{\E(p_1^{-1},p_1^{-1}\rhd n)\E(\cdot,n)}\lambda^P_{p_1},
$$
and since $\E(p_1^{-1},p_1^{-1}\rhd n)=\overline{\E(p_1,n)}$, we see that~\eqref{eq:K1} equals
$$
\big((1\otimes\lambda^P_{p_1}\bar\F\lambda^N_{p_1^{-1}\rhd n})Tf\big)(p_1\blhd n,p_2)=\big((1\otimes\bar\F\lambda^N_{p_1^{-1}\rhd n})Tf\big)(p_1\blhd n,p_1^{-1}p_2).
$$
To summarize, we have shown that
\begin{equation}\label{eq:K2}
\big((\pi^P(n)\otimes\bar\pi^P(n))Kf\big)(p_1,p_2)=\big((1\otimes\bar\F\lambda^N_{p_1^{-1}\rhd n})Tf\big)(p_1\blhd n,p_1^{-1}p_2).
\end{equation}

On the other hand, we have
$$
(K\lambda^G_nf)(p_1,p_2)=\big((1\otimes\bar\F)T\lambda^G_nf\big)(p_1,p_1^{-1}p_2).
$$
Comparing this with~\eqref{eq:K2} we see that in order to finish the proof it suffices to show that
$$
\big((1\otimes\lambda^N_{p_1^{-1}\rhd n})Tf\big)(p_1\blhd n,n')=(T\lambda^G_nf)(p_1,n')
$$
for all $n'\in N$, that is,
$$
(p_1\blhd n)(p_1^{-1}\rhd n)^{-1}n'=n^{-1}p_1n',
$$
which is indeed true by the definition of the dressing actions.
\ep

We remark that the above proof did not use the definition of $\F$, only the intertwining property established in Lemma~\ref{lem:Fourier1}.

\smallskip

We are now ready to prove the main result of the paper.

\begin{theorem}\label{thm:main}
Assume $G=P\bowtie N$ and $\E\colon P\times N\to\T$ is a bi-$1$-cocycle. Then the following conditions are equivalent:
\begin{enumerate}
  \item the cocycle $\sigma_\E\in Z^2(G;\T)$ defined by~\eqref{eq:MackeyE} is nondegenerate;
  \item the matrix $(\E(p,n))_{p\in P,n\in N}$ is invertible;
  \item the matrix $(\E(p,n))_{p\in P,n\in N}$ is Hadamard.
\end{enumerate}
\end{theorem}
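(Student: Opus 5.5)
The plan is to prove (3)$\Rightarrow$(2)$\Rightarrow$(1)$\Rightarrow$(3). The first implication is trivial: a Hadamard matrix is a nonzero multiple of a unitary. Note that invertibility forces $|P|=|N|$, so $|G|=|P|^2$. The Hadamard condition then says exactly that $|P|^{-1/2}\F$ is unitary.

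For (2)$\Rightarrow$(1), assume $(\E(p,n))$ is invertible, so that $\F$ is invertible and $|P|=|N|=|G|^{1/2}$. By Lemma~\ref{lem:Fourier1}, $\F$ intertwines $\pi^N$ and $\pi^P$; this is not needed for irreducibility, but it shows the two Schr\"odinger models agree. The main step uses Lemma~\ref{lem:Fourier2}. The operator $K=W_P(1\otimes\bar\F)T$ is invertible, because $W_P$ and $T$ are unitary and $\bar\F=J\F J$ is invertible. Hence $K$ is a linear isomorphism intertwining $\lambda^G$ with $\pi^P\otimes\bar\pi^P$. Now take any unitary $\sigma_\E$-representation $\rho$. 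Then $\rho\otimes\bar\pi^P$ is an honest representation, and $\rho\otimes\bar\pi^P\otimes\pi^P$ contains $\rho\otimes\lambda^G\cong \dim\rho\cdot(\text{a }\sigma_\E\text{-version of the regular rep})$. I would rather argue with dimensions. Since $\lambda^G\cong\pi^P\otimes\bar\pi^P$, the commutant of $\pi^P(G)$ can be read off from the multiplicity of the trivial representation. That multiplicity is $\dim\Hom_G(\pi^P,\pi^P)$, the number of $G$-invariant vectors in $\pi^P\otimes\bar\pi^P\cong \ell^2(H)\otimes\overline{\ell^2(H)}$, $H=\ell^2(P)$. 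In $\lambda^G$ the trivial representation has multiplicity $1$. Therefore $\operatorname{End}_G(\pi^P)=\C$, so $\pi^P$ is an irreducible $\sigma_\E$-representation of dimension $|P|=|G|^{1/2}$. Lemma~\ref{lem:Nondeg}(2) then gives nondegeneracy. (An equivalent route is to note that an invertible intertwiner between finite-dimensional unitary representations implies unitary equivalence, by polar decomposition.)

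For (1)$\Rightarrow$(3), assume $\sigma_\E$ is nondegenerate, so $|G|$ is a square and there is a unique irreducible $\sigma_\E$-representation $\pi$ of dimension $|G|^{1/2}$. Since $\pi^P$ and $\pi^N$ are nonzero $\sigma_\E$-representations, Lemma~\ref{lem:Nondeg}(3) gives $|P|,|N|\ge|G|^{1/2}$. Combined with $|P||N|=|G|$, this yields $|P|=|N|=|G|^{1/2}$. By the ``moreover'' part of Lemma~\ref{lem:Nondeg}, both $\pi^P$ and $\pi^N$ are then unitarily equivalent to $\pi$, hence irreducible. Schur's lemma now applies to the intertwiner $\F$ from Lemma~\ref{lem:Fourier1}. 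Choosing a unitary intertwiner $u\colon\ell^2(N)\to\ell^2(P)$, the operator $u^*\F$ commutes with the irreducible $\pi^N$, so $\F=cu$ for a scalar $c$. Thus $\F^*\F=|c|^2 I$. Comparing traces, $\Tr(\F^*\F)=\sum_{p,n}|\E(p,n)|^2=|P||N|$, so $|c|^2=|P|$. Hence $\F\F^*=|P|I$ and $c\neq0$. Since the entries of $\F$ have modulus one, the matrix $(\overline{\E(p,n)})$, and therefore its complex conjugate $(\E(p,n))$, is Hadamard.

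The main obstacle is the step (2)$\Rightarrow$(1), namely extracting irreducibility of $\pi^P$ from the invertibility of $K$. The cleanest argument counts invariant vectors in $\pi^P\otimes\bar\pi^P\cong\lambda^G$, and this has to be done carefully. The key identification is that $G$-invariant vectors in $\ell^2(P)\otimes\overline{\ell^2(P)}$ correspond to operators commuting with $\pi^P(G)$; since the cocycles $\sigma_\E$ and $\bar\sigma_\E$ cancel, this is legitimate. The invariant subspace of $\lambda^G$ is the constants, which is one-dimensional, so the commutant of $\pi^P$ is $\C$.
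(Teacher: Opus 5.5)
Your proposal is correct and follows essentially the same route as the paper. For (2)$\Rightarrow$(1) you use the invertibility of $K$ to identify $\pi^P\otimes\bar\pi^P$ (equivalently $\Ad\pi^P$) with $\lambda^G$ and count invariant vectors; for (1)$\Rightarrow$(3) you use Lemma~\ref{lem:Nondeg} to force $|P|=|N|=|G|^{1/2}$ and then apply Schur's lemma to $\F$, where your trace computation only makes explicit why the scalar is nonzero. You should delete the abandoned aside about $\rho\otimes\bar\pi^P\otimes\pi^P$, and correct the notation $\ell^2(H)$ with $H=\ell^2(P)$ to $H\otimes\bar H$.
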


\bp
Obviously, (3) implies (2), so we only need to prove the implications (1)$\Rightarrow$(3) and (2)$\Rightarrow$(1).

\smallskip

(1)$\Rightarrow$(3) Since $\pi^P$ and $\pi^N$ are unitary $\sigma_\E$-representations of dimension~$|P|$ and~$|N|$, resp., and $|P|\,|N|=|G|$, by Lemma~\ref{lem:Nondeg} it follows that $|P|=|N|=|G|^{1/2}$, both representations are irreducible and we have a unitary equivalence $\pi^P\sim\pi^N$. Then $\F$, being a nonzero intertwiner between irreducible unitary $\sigma_\E$-representations by Lemma~\ref{lem:Fourier1}, must be a scalar multiple of a unitary operator by Schur's lemma. Thus, the matrix $(\E(p,n))_{p,n}$ is Hadamard.

\smallskip

(2)$\Rightarrow$(1) Since the invertibility of $(\E(p,n))_{p,n}$ already implies that $|P|=|N|=|G|^{1/2}$, by Lem\-ma~\ref{lem:Nondeg} it suffices to show that the $\sigma_\E$-representation $\pi^P$ is irreducible. Consider the representation $\Ad\pi^P$ of $G$ on $B(\ell^2(P))$. Then irreducibility of $\pi^P$ is equivalent to the claim that the space of invariant vectors for $\Ad\pi^P$ is one-dimensional. We have a standard equivalence $\pi^P\otimes\bar\pi^P\sim\Ad\pi^P$, explicitly given by mapping $\xi\otimes\zeta\in\ell^2(P)\otimes\ell^2(P)$ into the rank-one operator $(\cdot,J\zeta)\xi\in B(\ell^2(P))$. By Lemma~\ref{lem:Fourier2} and the assumption of invertibility of $\F$ we then conclude that $\Ad\pi^P\sim\lambda^G$. Since the space of vectors fixed by the operators of the regular representation is one-dimensional, this gives the result.
\ep

\begin{remark}
Another, and even shorter, proof of the implication (2)$\Rightarrow$(1) can be obtained as follows. Observe first that the space $\Hom_G(\pi^N,\pi^P)$ is one-dimensional and spanned by~$\F$ for any bi-$1$-cocycle $\E$, no nondegeneracy assumptions are needed. Indeed, assuming that $S\colon\ell^2(N)\to\ell^2(P)$ intertwines~$\pi^N$ and~$\pi^P$, consider the vector $S\delta_e$. Since $\delta_e$ is fixed by $\pi^N(P)$, this vector is fixed by $\lambda^P(P)$, hence it is a scalar multiple $c\,\un$ of the constant function $1$ on $P$. Then
$$
S\delta_n=S\lambda^N_n\delta_e=\pi^P(n)S\delta_e=c\,\pi^P(n)\un=c\,\overline{\E(\cdot,n)}=c\,\F\delta_n,
$$
hence $S=c\,\F$. Now, if $\F$ is invertible, then we conclude that $\pi^N$ and $\pi^P$ are equivalent and $\operatorname{End}_G(\pi^P)$ is one-dimensional, hence $\pi^P$ is irreducible, proving the desired implication. The advantage of the proof using the operator $K$ that we gave is that it generalizes to infinite-dimensional representations. This operator will also allow us to compute dual cocycles associated with $\sigma_\E$.  \ee
\end{remark}

Let us reformulate the theorem in terms of $2$-cocycles instead of bi-$1$-cocycles.

\begin{corollary}\label{cor:cocycle-Hadamard}
Assume $G=P\bowtie N$ and $\sigma\in Z^2(G;\T)$ is a cocycle such that $\sigma|_{P\times P}=1$ and $\sigma|_{N\times N}=1$. Then $\sigma$ is nondegenerate if and only if the matrix
\begin{equation}\label{eq:sigma-matrix}
\big(\sigma(p\rhd n, p\lhd n)\overline{\sigma(p,n)}\big)_{p\in P,n\in N}
\end{equation}
is invertible, if and only if it is Hadamard.
\end{corollary}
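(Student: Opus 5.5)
The plan is to reduce the corollary to Theorem~\ref{thm:main} by means of Proposition~\ref{prop:Mackey}. Given $\sigma$ with $\sigma|_{P\times P}=1$ and $\sigma|_{N\times N}=1$, Proposition~\ref{prop:Mackey} supplies the bi-$1$-cocycle
$$
\E(p,n)=\sigma(p^{-1}\rhd n,p^{-1}\lhd n)\overline{\sigma(p^{-1},n)},
$$
and it also gives $[\sigma]=[\sigma_\E]$. Nondegeneracy depends only on the cohomology class, so $\sigma$ is nondegenerate if and only if $\sigma_\E$ is.

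By Theorem~\ref{thm:main}, this happens if and only if the matrix $(\E(p,n))_{p,n}$ is invertible, and if and only if it is Hadamard. The matrix~\eqref{eq:sigma-matrix} has $(p,n)$ entry equal to $\E(p^{-1},n)$. So it is obtained from $(\E(p,n))_{p,n}$ by the row permutation $p\mapsto p^{-1}$.

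A row permutation preserves both properties. It does not change invertibility. It also keeps all entries of modulus one and keeps $HH^*=mI$, because $H\mapsto \Pi H$ with $\Pi$ a permutation matrix gives $\Pi HH^*\Pi^*=mI$. The corollary therefore follows.

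There is no substantial obstacle. The only points to check are that nondegeneracy is an invariant of the cohomology class, which was noted before Lemma~\ref{lem:Nondeg}, and that the inversion $p\mapsto p^{-1}$ is correctly matched between~\eqref{eq:b1-from-2} and~\eqref{eq:sigma-matrix}.
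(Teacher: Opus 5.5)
Your proposal is correct and follows the paper's own proof, which simply cites Theorem~\ref{thm:main} together with Proposition~\ref{prop:Mackey}. Your explicit check that the entries of \eqref{eq:sigma-matrix} are $\E(p^{-1},n)$, a row permutation of $(\E(p,n))_{p,n}$ that preserves invertibility and the Hadamard property, fills in the one detail the paper leaves implicit.
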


\bp
This follows from Theorem~\ref{thm:main} and Proposition~\ref{prop:Mackey}. 
\ep

\begin{remark}
The matrix~\eqref{eq:sigma-matrix} depends on the choice of $\sigma$, while nondegeneracy of $\sigma$ depends only on the cohomology class $[\sigma]$. Therefore invertibility and the Hadamard property of the matrix also depend only on the cohomology class. At the same time, when the matrix is non-invertible, its rank depends on the cocycle, not just on its cohomology class. In order to see this, assume $N$ is normal and abelian, so that $\rhd\colon P\times N\to N$ is an action by automorphisms and $G=P\ltimes N$. Given a character $\chi\in \widehat N$, consider the function $f(pn):=\chi(n)$ and the coboundary $\sigma:=\partial f$. Then
$$
\sigma(p_1n_1,p_2n_2)=\chi(n_1)\overline{\chi(p_2^{-1}\rhd n_1)},\qquad \sigma(p\rhd n, p\lhd n)\overline{\sigma(p,n)}=\chi(p\rhd n)\overline{\chi(n)}.
$$
We see that $\sigma$ has trivial restrictions to $P$ and $N$, as required, but the rank of the matrix~\eqref{eq:sigma-matrix} equals the number of elements in the $P$-orbit of~$\chi$ in~$\widehat N$ and so in general depends on the choice of~$\chi$.\ee
\end{remark}

Theorem~\ref{thm:A} from the introduction provides yet another possible formulation of Theorem~\ref{thm:main}. We spell out the details.

\bp[Proof of Theorem~\ref{thm:A}]
Assume $\pi$ is a projective unitary representation of $G$ of Heisenberg type with phase~$\Phi$. Let $\sigma$ be the cocycle corresponding to~$\pi$. Put $\E(p,n):=\Phi(p^{-1},n)$. Then from Proposition~\ref{prop:Mackey} and identity~\eqref{eq:exchange} we see that $\E$ is a bi-$1$-cocycle and $[\sigma_\E]=[\sigma]$. Since $\dim\pi=|G|^{1/2}$ by assumption, irreducibility of $\pi$ is equivalent to nondegeneracy of $\sigma_\E$. Hence the equivalence of (1)--(3) in Theorem~\ref{thm:A} follows from Theorem~\ref{thm:main}. The uniqueness statement about $\pi$ follows then from simplicity of~$\C^{\sigma}G$.
\ep

Every nondegenerate class in $H^2(G;\T)$ defines a dual $2$-cocycle~~\cites{MR733301,MR990110,MR1264314,MR1747109}, or more precisely, a cohomology class of such cocycles, where two dual $2$-cocycles $\Omega_1$ and $\Omega_2$ are called cohomologous if there exists an invertible element $b\in\C G$ such that
$$
\Omega_1=(b\otimes b)\Omega_2\Delta(b)^{-1}.
$$
Our next goal is to find a representative for the cohomology class of dual $2$-cocycles corresponding to a nondegenerate class $[\sigma_\E]$. We will follow the procedure described in~\cite{BGNT3}*{Section~2.2}, which produces a unitary dual $2$-cocycle.

Assuming that $\E$ satisfies the equivalent conditions of Theorem~\ref{thm:main}, define a linear isomorphism
\begin{equation}\label{eq:op-prime-def}
\Op\colon C(G)\to B(\ell^2(P))\quad\text{by}\quad \Op(f):=|G|^{-1/2}Kf,
\end{equation}
where we identified $B(\ell^2(P))$ with $\ell^2(P)\otimes\ell^2(P)$ as in the proof of Theorem~\ref{thm:main}. The factor~$|G|^{-1/2}$ here is the product of two factors $|G|^{-1/4}$, one of which comes from the normalization of $\F$ to make it unitary, and the other from the equality $\sum_{g\in G}(\Ad\pi^P(g))(S)=(|G|^{1/4})^2\Tr(S)1$, see~\cite{BGNT3}*{Remark 2.11} and note that our current operator $\Op$ corresponds to the operator~$\Op'$ there. Define a new product $\star$ on $C(G)$ by the identity
$$
\Op(f_1\star f_2)=\Op(f_1)\Op(f_2).
$$
Then there is a unique element $\Omega\in \C G\otimes\C G$ such that
\begin{equation}\label{eq:star}
(f_1\star f_2)(g)=(f_1\otimes f_2)\big((g\otimes g)\Omega^*\big),
\end{equation}
where we view the space $C(G)$ as a dual of $\C G$ in the obvious way. The element $\Omega$ is the unitary dual $2$-cocycle we are after.

By construction this dual cocycle is \emph{nondegenerate}, meaning that the corresponding deformed algebra $(C(G),\star)$ is simple. Nondegeneracy has several other equivalent characterizations, see, e.g., \cite{MR2844801}*{Section~1}.

\begin{proposition}\label{prop:dual-computation}
Assume $G=P\bowtie N$ and $\E\colon P\times N\to\T$ is a bi-$1$-cocycle satisfying the equivalent conditions of Theorem~\ref{thm:main}. Then the corresponding unitary dual $2$-cocycle defined by~\eqref{eq:star} is given by
$$
\Omega=|G|^{-1/2}\sum_{p\in P,n\in N}\overline{\E(p^{-1},n^{-1})}\,n\otimes p.
$$
\end{proposition}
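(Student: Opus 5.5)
The plan is to compute the product $\star$ directly from the definition $\Op(f_1\star f_2)=\Op(f_1)\Op(f_2)$ on delta functions. Then we read off $\Omega^*$ from~\eqref{eq:star}. For $f\in C(G)$ the operator $\Op(f)$ corresponds to the vector $|G|^{-1/2}Kf\in\ell^2(P)\otimes\ell^2(P)$. Unwinding $K=W_P(1\otimes\bar\F)T$, the matrix entries are
$$
\langle\Op(f)\delta_{p_2},\delta_{p_1}\rangle=|G|^{-1/2}\sum_{n}\E(p_1^{-1}p_2,n)f(p_1n),
$$
possibly up to conjugation and inversion conventions coming from the identification $\xi\otimes\zeta\mapsto(\cdot,J\zeta)\xi$, which we will track carefully. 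So $\Op(\delta_{pn})$ is the operator $\delta_{p_2}\mapsto|G|^{-1/2}\E(p^{-1}p_2,n)\delta_p$, i.e.\ a rank-one operator supported in row $p$.

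The first main step is to multiply two such operators. From the formula above,
$$
\Op(\delta_{p n})\Op(\delta_{p'n'})=|G|^{-1}\,\E(p^{-1}p',n)\,(\text{row-}p\text{ operator } \delta_{p_2}\mapsto\E(p'^{-1}p_2,n')\delta_p).
$$
The second step is to re-expand this row-$p$ operator as a combination of the $\Op(\delta_{pm})$, $m\in N$. This uses the Hadamard property: the functions $p_2\mapsto\E(p^{-1}p_2,m)$ are obtained from the columns of the Hadamard matrix by a left translation twisted via the cocycle identity $\E(p^{-1}p_2,m)=\E(p^{-1},m)\E(p_2,p\rhd m)$. Hence they are orthogonal with norm squared $|P|=|G|^{1/2}$. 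Expanding $p_2\mapsto\E(p'^{-1}p_2,n')$ in this basis gives coefficients
$$
|G|^{-1/2}\sum_{p_2}\E(p'^{-1}p_2,n')\overline{\E(p^{-1}p_2,m)}.
$$
Using both bi-$1$-cocycle identities~\eqref{eq:b1-cocycle1} and orthogonality, this sum should collapse to a single $m$, namely the one with $pm$ equal to a product $pnp'n'$-type element rewritten through the exchange relation. It then produces a single phase.

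The third step is to compare with~\eqref{eq:star}. The right side for $f_1=\delta_{g_1}$, $f_2=\delta_{g_2}$ is the $\delta_{g_1}\otimes\delta_{g_2}$-coefficient of $(g\otimes g)\Omega^*$. With the claimed $\Omega^*=|G|^{-1/2}\sum\E(p^{-1},n^{-1})\,n^{-1}\otimes p^{-1}$, this says $(\delta_{g_1}\star\delta_{g_2})(g)$ is nonzero only when $g_1=gn^{-1}$ and $g_2=gp^{-1}$, with value $|G|^{-1/2}\E(p^{-1},n^{-1})$. We must check that the computed product $\delta_{pn}\star\delta_{p'n'}$ has exactly these support and phase properties. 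I expect the main obstacle to be this bookkeeping: identifying the collapsing sum (an orthogonality relation that requires twisting one column into another via $\rhd$, $\lhd$ and~\eqref{eq:flip}) and matching the resulting phase with $\overline{\E(p^{-1},n^{-1})}$ after inverting the conventions.

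A cleaner alternative I would try first is to avoid the full product and instead determine $\Omega$ by its general form. Proposition~\ref{prop:dual-bi1}, applied to the flipped matched pair $(N,P)$, shows that an element supported on $N\times P$ of this shape is a dual cocycle. Uniqueness in~\eqref{eq:star} then reduces the problem to checking a few products, such as $\delta_e\star\delta_{g}$ and $\delta_{n}\star\delta_{p}$, which are simple rank-one computations.
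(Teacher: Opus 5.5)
Your main plan is correct, but it takes a more brute-force route than the paper. Your formula $\Op(\delta_{pn})\delta_{p_2}=|G|^{-1/2}\E(p^{-1}p_2,n)\delta_p$ is exactly right with the identification $\xi\otimes\zeta\mapsto(\cdot,J\zeta)\xi$, with no extra conjugations.

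In your second step you do not actually need orthogonality. By the first identity in~\eqref{eq:b1-cocycle1}, $\E(p'^{-1}p_2,n')=\E(p'^{-1},n')\E(p_2,p'\rhd n')$ and $\E(p^{-1}p_2,m)=\E(p^{-1},m)\E(p_2,p\rhd m)$. So the row functional is already a scalar multiple of the one for the single index $m=(p^{-1}p')\rhd n'$. This index does not involve $n$, contrary to your description; equivalently, $pm\in p'n'P$. One gets
\[
\delta_{pn}\star\delta_{p'n'}=|G|^{-1/2}\,\E(p^{-1}p',n)\,\E(p'^{-1}p,n')\,\delta_{p((p^{-1}p')\rhd n')}.
\]
Matching this phase with $\E(\tilde p^{-1},\tilde n)$, where $\tilde p\tilde n=n'^{-1}p'^{-1}pn$, does require the second cocycle identity together with~\eqref{eq:dress2} and~\eqref{eq:flip}, as you expected.

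The paper sidesteps all of this. By Lemma~\ref{lem:Fourier2}, $\Op$ intertwines $\lambda^G$ with the automorphisms $\Ad\pi^P(g)$. Hence $\star$ is left $G$-equivariant and $(\delta_g\star\delta_h)(e)=(\delta_{h^{-1}g}\star\delta_e)(h^{-1})$, so only the products $\delta_{pn}\star\delta_e$ are needed. Since $\Op(\delta_e)=|G|^{-1/2}(\cdot,\un)\delta_e$, one line gives $\Op(\delta_{pn})\Op(\delta_e)=|G|^{-1/2}\E(p^{-1},n)\Op(\delta_p)$. Then $\Omega^*=\sum_{g,h}(\delta_g\star\delta_h)(e)\,g\otimes h$ is read off directly. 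Your route produces the full multiplication table and checks~\eqref{eq:star} at every $g$. The paper's route is much shorter and needs no phase-matching identity.

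Your ``cleaner alternative'' is not sound as justified. Proposition~\ref{prop:dual-bi1} only says that an element of that shape with bi-$1$-cocycle coefficients satisfies the cocycle identity. It tells you nothing about the support of the $\Omega$ defined by~\eqref{eq:star}. Also, two dual $2$-cocycles are not identified by agreeing on a few products. The reduction that genuinely makes a small family of computations suffice is the left $G$-equivariance above. With it, computing $\delta_g\star\delta_e$ (or $\delta_e\star\delta_g$) for all $g\in G$ is enough, and Proposition~\ref{prop:dual-bi1} is not needed at all.
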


\bp
Recalling that $K=W_P(1\otimes\bar\F)T$ and observing that $W_P=\sum_p\delta_p\otimes\lambda^P_p$, we get
\begin{equation}\label{eq:op-prime}
\Op(\delta_{pn})=|G|^{-1/2}(\cdot,J\lambda^P_p\bar\F\delta_n)\delta_p\in B(\ell^2(P)).
\end{equation}
As before, write $\un\in\ell^2(P)$ for the constant function $1$. Then $\bar\F\delta_e=\un$ and we get
$$
\Op(\delta_{p})=|G|^{-1/2}(\cdot,\un)\delta_p.
$$
It follows then that
\begin{equation}\label{eq:op-prime-product}
\Op(\delta_{pn})\Op(\delta_e)=|G|^{-1}(\delta_e,J\lambda^P_p\bar\F\delta_n)(\cdot,\un)\delta_p=|G|^{-1/2}\E(p^{-1},n)\Op(\delta_p).
\end{equation}
Therefore the new product $\star$ on $C(G)$ satisfies
$\delta_{pn}\star \delta_e=|G|^{-1/2}\E(p^{-1},n)\delta_p$. Since the product is left $G$-equivariant, we then get
$$
\delta_{hpn}\star \delta_h=|G|^{-1/2}\E(p^{-1},n)\delta_{hp}
$$
for all $h\in G$, $p\in P$ and $n\in N$. In other words, for all $g,h\in G$,
\begin{equation}\label{eq:star2}
\delta_{g}\star \delta_h=|G|^{-1/2}\E(p^{-1},n)\delta_{hp},\quad\text{where}\quad pn=h^{-1}g.
\end{equation}

From the defining relation~\eqref{eq:star} we deduce then that
$$
\Omega^*=\sum_{g,h\in G}(\delta_g\star\delta_h)(e)\,g\otimes h=|G|^{-1/2}\sum_{p\in P,n\in N}\E(p,n)\,n\otimes p.
$$
By taking the adjoints we get the result.
\ep

\begin{corollary}\label{cor:another-dual}
In the setting of the proposition, the element
$$
|G|^{-1/2}\sum_{p\in P,n\in N}\E(p^{-1},n^{-1})\,p\otimes n
$$
is a unitary dual $2$-cocycle cohomologous to $\Omega$.
\end{corollary}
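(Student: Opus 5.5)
The plan is to exploit the symmetry between $P$ and $N$: swap the roles of the two factors and apply Proposition~\ref{prop:dual-computation} to the opposite factorization $G=N\bowtie P$. Then I would use the flip $\varsigma(a\otimes b)=b\otimes a$ together with conjugation by a suitable element to get an element supported on $P\times N$. The more direct route goes through the flip alone. Recall that if $\Omega$ is a dual $2$-cocycle on $G$, then $\Omega_{21}:=\varsigma(\Omega)$ is a dual $2$-cocycle for the co-opposite structure. It is not in general a dual cocycle for $\Delta$ itself, so the flip alone is not enough. For group algebras, however, $\Delta^{\mathrm{op}}=\Delta$, since $\Delta(g)=g\otimes g$ is cocommutative. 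Hence $\Omega_{21}$ satisfies \eqref{eq:dual-2-cocycle} with the order of tensor legs reversed. Reversing the legs of that identity, i.e. applying $a\otimes b\otimes c\mapsto c\otimes b\otimes a$, turns it into $(\Omega_{21}\otimes1)\ldots$ with the products in the \emph{opposite} order. So $\Omega_{21}$ is naturally a cocycle for the opposite algebra, and one takes $(\Omega_{21})^{-1}=(\Omega^*)_{21}$, using unitarity, or rather $\Omega_{21}^*$.

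Concretely, I would consider the candidate $\Omega':=|G|^{-1/2}\sum_{p,n}\E(p^{-1},n^{-1})\,p\otimes n$. Up to the normalizing scalar, this is exactly the element of Proposition~\ref{prop:dual-bi1} attached to the bi-$1$-cocycle $\E$, noting $\E(e,e)=1$. By Proposition~\ref{prop:dual-bi1} it therefore satisfies \eqref{eq:dual-2-cocycle}. Comparing with Proposition~\ref{prop:dual-computation}, we see $\Omega'=(\Omega^*)_{21}$. Since $\Omega$ is unitary and the flip is a $*$-automorphism, $\Omega'$ is unitary, hence invertible. So $\Omega'$ is a unitary dual $2$-cocycle, and what remains is to show that it is cohomologous to $\Omega$.

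For the cohomology statement I would use the standard fact that for a dual cocycle $\Omega$ on a cocommutative $\C G$ there is the $R$-matrix type relation: $\Omega_{21}^{*}$ is cohomologous to $\Omega$ via an explicit coboundary. The alternative I'd actually try first avoids general theory. Both $\Omega$ and $\Omega'$ are unitary nondegenerate cocycles. Nondegenerate classes correspond bijectively to pairs (class in $H^2(G;\T)$ up to automorphism data) via the Movshev/Etingof–Gelaki correspondence, and more concretely the twisted algebra $(C(G),\star')$ defined by $\Omega'$ is simple. To pin down the class, I would redo the computation of Proposition~\ref{prop:dual-computation} using $\pi^N$ in place of $\pi^P$, with an operator $K'\colon\ell^2(G)\to\ell^2(N)\otimes\ell^2(N)$ built from $W_N$ and $\F^*$. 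By symmetry this yields a dual cocycle with support $P\times N$, equal to $\Omega'$. Since $\F$ unitarily intertwines $\pi^N$ and $\pi^P$ (Lemma~\ref{lem:Fourier1}, unitary after normalization), the two quantization maps $\Op$ and $\Op'$ differ by conjugation by $\F$. The procedure of \cite{BGNT3} assigns cohomologous cocycles to equivalent representations, with the cohomology implemented by the element $b$ relating the two $G$-equivariant identifications $C(G)\cong B(\ell^2)$. The main obstacle is precisely this step: checking that the construction of \cite{BGNT3} is independent of the choice of equivariant isomorphism up to dual coboundary. One would argue that two $G$-equivariant linear isomorphisms $C(G)\to B(H)$ intertwining $\lambda^G$ with $\Ad\pi$ differ by a $G$-equivariant automorphism of $C(G)$, i.e. right convolution by an invertible $b\in\C G$. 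This changes $\Omega$ to $(b\otimes b)\Omega\Delta(b)^{-1}$.
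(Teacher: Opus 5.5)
Your final route is the paper's proof: run the construction of Proposition~\ref{prop:dual-computation} for the opposite factorization $G=N\bowtie P$, and compare the two quantization maps through the unitary $|G|^{-1/4}\F$. Your closing argument is also correct. Two linear isomorphisms $C(G)\to B(\ell^2(P))$ intertwining $\lambda^G$ with $\Ad\pi^P$ differ by right convolution with an invertible $b\in\C G$, and this replaces the dual cocycle by a cohomologous one. That is a proof of the fact the paper uses implicitly, namely that the dual class depends only on $[\sigma]$.

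What ``by symmetry'' hides is the one input the paper makes explicit. The relevant bi-$1$-cocycle for the matched pair $(N,P)$ is $\E'(n,p):=\E(p,n)^{-1}$, obtained by inverting \eqref{eq:exchange}, so that $[\sigma_{\E'}]=[\sigma_\E]$. Its Schr\"odinger representation on $\ell^2(N)$ agrees with $\pi^N$ on $P$ and on $N$, its Fourier operator is $\F^*$, and its matrix is Hadamard. Proposition~\ref{prop:dual-computation} for $(N,P)$ then yields $|G|^{-1/2}\sum_{p,n}\overline{\E'(n^{-1},p^{-1})}\,p\otimes n$, which is exactly your $\Omega'$.

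Your second paragraph, which is where you get unitarity, is wrong and should be dropped. Since $g^*=g^{-1}$, we have $\Omega^*=|G|^{-1/2}\sum_{p,n}\E(p,n)\,n\otimes p$, as computed in the proof of Proposition~\ref{prop:dual-computation}. Hence $(\Omega^*)_{21}=|G|^{-1/2}\sum_{p,n}\E(p,n)\,p\otimes n$. By contrast, $\Omega'$ has coefficients $\E(p^{-1},n^{-1})=\E(p,p\rhd n^{-1})^{-1}$ (see the proof of Proposition~\ref{prop:Mackey}). These differ from $\E(p,n)$ in general; for example, for $N$ abelian and normal they equal $\E(p,p\rhd n)$.

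Moreover, $(\Omega^*)_{21}=(\Omega_{21})^{-1}$ satisfies the cocycle identity with the factors in reversed order. So it is not a dual $2$-cocycle in general, and the claim that $\Omega_{21}^*$ is cohomologous to $\Omega$ is false in general.

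Your first paragraph is also off. Permuting tensor legs is an algebra automorphism, so by cocommutativity $\Omega_{21}$ is again a dual $2$-cocycle; the paper uses this in the proof of Theorem~\ref{thm:dual}. No products get reversed. However, $\Omega_{21}$ defines the opposite deformed algebra and hence corresponds to the conjugate class $[\overline{\sigma_\E}]$, not to $[\sigma_\E]$.

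The correct relation is that $\Omega'$ is the coefficientwise complex conjugate of $\Omega_{21}$. Coefficientwise conjugation is an antilinear $*$-automorphism of $\C G\otimes\C G$ compatible with $\Delta$, so this gives unitarity directly. In any case unitarity comes for free from your main route, since the construction of Proposition~\ref{prop:dual-computation} always produces a unitary dual cocycle.
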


\bp
By applying the inverse to the defining relation~\eqref{eq:exchange} we see that the function $\E'(n,p):=\E(p,n)^{-1}$ is a bi-$1$-cocycle for the matched pair $(N,P)$ and $[\sigma_{\E'}]=[\sigma_\E]$. Hence the corollary follows from Proposition~\ref{prop:dual-computation} applied to the matched pairs $(P,N)$ and $(N,P)$ and the bi-$1$-cocycles~$\E$ and~$\E'$.
\ep

We see that the cocycles we got are of the form studied in Proposition~\ref{prop:dual-bi1}. We can now return to the question of invertibility left open in that proposition and finish the proof of Theorem~\ref{thm:B} from the introduction. Since we already know by Corollary~\ref{cor:roots} that any $\C^\times$-valued bi-$1$-cocycle is automatically $\T$-valued, all that remains is to prove the ``only if'' part of the following theorem.

\begin{theorem}\label{thm:dual}
Assume $G=P\bowtie N$ and $\E\colon P\times N\to\T$ is a bi-$1$-cocycle. Then the element
$$
\Omega:=|G|^{-1/2}\sum_{p\in P,n\in N}\overline{\E(p^{-1},n^{-1})}\,n\otimes p\in \C G\otimes\C G
$$
is invertible if and only if $\E$  satisfies the equivalent conditions of Theorem~\ref{thm:main}.
\end{theorem}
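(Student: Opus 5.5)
The ``if'' direction is already settled: when $\E$ satisfies the conditions of Theorem~\ref{thm:main}, Proposition~\ref{prop:dual-computation} shows that $\Omega$ is a unitary dual $2$-cocycle, hence invertible. So only the converse remains. We assume $\Omega$ is invertible in $\C G\otimes\C G$ and aim to show that the matrix $(\E(p,n))_{p,n}$ is invertible.

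The plan is to apply a suitable representation of $\C G\otimes\C G$ to $\Omega$ that turns it into an operator built from the ``Fourier'' matrix $\F$. The natural choice is $\lambda^{G/P}\otimes\lambda^{G/N}$ acting on $\ell^2(N)\otimes\ell^2(P)$, or rather a variant of it. Under $\lambda^N$ on the first leg and $\lambda^P$ on the second we get
$$
X:=(\lambda^{G/P}\otimes\lambda^{G/N})(\Omega)=|G|^{-1/2}\sum_{p,n}\overline{\E(p^{-1},n^{-1})}\,\lambda^N_n\otimes\lambda^P_p .
$$
Invertibility of $\Omega$ forces $X$ to be invertible, since representations preserve invertibility.

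We then evaluate $X$ on $\delta_e\otimes\delta_e$, or more generally compute a matrix coefficient. We have $X(\delta_e\otimes\delta_e)=|G|^{-1/2}\sum_{p,n}\overline{\E(p^{-1},n^{-1})}\,\delta_n\otimes\delta_p$, which is the vector whose coefficients form the (reindexed, conjugated) matrix of $\E$. If $X$ is invertible, this vector is nonzero, but that alone is not enough. A better approach is to use the structure of $\Omega$ as a cocycle (Proposition~\ref{prop:dual-bi1}) together with the left regular representation. Apply $\lambda^G\otimes\lambda^G$ and restrict to the subspace of vectors $\sum_n\delta_n\otimes f_n$. Alternatively, apply $\iota\otimes\epsilon$-type slices: the partial slice map $(\iota\otimes\varphi)(\Omega)$ for functionals $\varphi$ on $\C G$ supported on $P$ yields elements $\sum_n\big(\sum_p\varphi(p)\overline{\E(p^{-1},n^{-1})}\big)n$.

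The key step is to relate invertibility of $\Omega$ to the rank of the matrix $A=(\overline{\E(p^{-1},n^{-1})})_{n,p}$. Write $\Omega=\sum_{n,p}A_{n,p}\,n\otimes p$. Its image in $\C N\otimes\C P\subset\C G\otimes\C G$ corresponds, under the linear identification $\C N\otimes\C P\cong\Hom(\C P^*,\C N)$, to the matrix $A$. The leg spaces $\{(\iota\otimes\varphi)\Omega\}$ therefore have dimension equal to $\operatorname{rank}A$.

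For an invertible $\Omega$, one uses that $\Omega$ is a cocycle: the twisted algebra $(C(G),\star_\Omega)$ is defined and equivariant. The standard fact is that the first leg of an invertible dual $2$-cocycle generates a left coideal of the right size. Concretely, I would take the product $\Omega\cdot\Omega^{-1}=1$ and apply $\lambda^{G/P}\otimes\lambda^{G/N}$ evaluated against $\delta_e$ vectors. The vector $\delta_e\in\ell^2(N)$ is $P$-fixed and $\delta_e\in\ell^2(P)$ is $N$-fixed. This should give an identity of the form $\F^{*}\circ B=$ a nonzero multiple of the identity on $\ell^2(N)$ for some operator $B$ extracted from $\Omega^{-1}$, so that $\F$ is left invertible and, comparing sizes, $|N|\le|P|$. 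By the symmetric argument with the matched pair $(N,P)$ (Corollary~\ref{cor:another-dual}'s symmetry), we get $|P|\le|N|$ and invertibility. The main obstacle is exactly extracting this identity: the coefficient bookkeeping with dressing actions, using the cocycle identity to move $\Delta$-terms so that $\Omega^{-1}$ acts on $\ell^2(P)\otimes\ell^2(N)$ through $\F$. If it becomes unwieldy, I would fall back on a dimension count: the deformed product $\star_\Omega$ on $C(G)$ is associative with unit, and $\delta_e\star(\cdot)$ restricted to functions supported on $N$ lands in functions on $P$ via the matrix $A$ (compare \eqref{eq:star2}). Invertibility of $\Omega$ makes left multiplication by a suitable invertible element injective, which forces $A$ to be injective, and then symmetric.
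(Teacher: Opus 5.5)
Your ``if'' direction is fine, but the converse is not proved. The proposal lists several strategies and explicitly leaves the decisive step undone (``the main obstacle is exactly extracting this identity''). The parts you do carry out cannot close this gap on their own:
\begin{itemize}
\item Evaluating $X=(\lambda^{G/P}\otimes\lambda^{G/N})(\Omega)$ on vectors, or noting that the leg space $\{(\iota\otimes\varphi)(\Omega)\}$ has dimension $\operatorname{rank}A$, only restates the problem. Invertibility of an element of $\C G\otimes\C G$ puts no lower bound on its tensor rank (think of $1\otimes 1$), so the cocycle structure must enter essentially.
\item The ``standard fact'' that the first leg of an invertible dual $2$-cocycle generates a coideal ``of the right size'' is not available in the form you need, and you do not prove it.
\item The fallback is incorrect as stated. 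By \eqref{eq:star2} one has $\delta_e\star\delta_n=|G|^{-1/2}\delta_n$ for $n\in N$, so left multiplication by $\delta_e$ does not produce $A$. The matrix appears only in $\delta_{pn}\star\delta_e=|G|^{-1/2}\E(p^{-1},n)\delta_p$. Moreover, $\delta_e\star\delta_e=|G|^{-1/2}\delta_e$, so $\delta_e$ is a rescaled nontrivial idempotent and is not invertible. You never identify an invertible element whose multiplication operator is governed by $A$.
\end{itemize}

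The missing idea, which is what the paper uses, is to exploit semisimplicity of the twisted algebra rather than injectivity of some multiplication map.
\begin{enumerate}
\item After passing to $(N,P)$ if needed, assume $|N|\le|P|$.
\item Show that $\Op=|G|^{-1/2}K\colon C(G)\to B(\ell^2(P))$ is an algebra homomorphism for the product $\star$ defined by $\Omega$ via \eqref{eq:star}. The identity $\Op(\delta_{pn})\Op(\delta_e)=|G|^{-1/2}\E(p^{-1},n)\Op(\delta_p)=\Op(\delta_{pn}\star\delta_e)$ uses no nondegeneracy. The general case follows by applying $\Ad\pi^P(h)$ and the equivariance of Lemma~\ref{lem:Fourier2}, which holds for any bi-$1$-cocycle.
\item Invertibility of $\Omega$ implies that $(C(G),\star)$ is semisimple \cite{MR1264314}*{Proposition~7}, so its image $\Op(C(G))$ is semisimple too.
\item This image is the algebra of all operators on $\ell^2(P)$ vanishing on $(\operatorname{Im}\F)^\perp$. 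If $(\operatorname{Im}\F)^\perp\neq0$, the operators in it with range in $(\operatorname{Im}\F)^\perp$ form a nonzero ideal with zero product.
\item Hence $\F$ is surjective, and with $|N|\le|P|$ the matrix $(\E(p,n))_{p,n}$ is invertible.
\end{enumerate}
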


\bp
The ``if'' part follows from Proposition~\ref{prop:dual-computation}. For the opposite implication, assume $\Omega$ is invertible, so that $\Omega$ is a dual $2$-cocycle by Proposition~\ref{prop:dual-bi1} and the fact that the flip map on $\C G\otimes\C G$ preserves the set of dual $2$-cocycles. We want to conclude that the matrix $(\E(p,n))_{p,n}$ is invertible. By passing to the pair $(N,P)$ if necessary we may assume that $|N|\le|P|$.

Define a new product $\star$ on $C(G)$ by~\eqref{eq:star}. Since it is left $G$-equivariant, it is completely specified by the values $(\delta_g\star\delta_h)(e)$. By the computations in the second part of the proof of Proposition~\ref{prop:dual-computation} it follows that this product is given by~\eqref{eq:star2}. Consider also the linear map $\Op\colon C(G)\to B(\ell^2(P))$ defined by~\eqref{eq:op-prime}. The computations in the first part of the proof of Proposition~\ref{prop:dual-computation} did not use any nondegeneracy assumptions, so by~\eqref{eq:op-prime-product} we have
$$
\Op(\delta_{pn})\Op(\delta_e)=|G|^{-1/2}\E(p^{-1},n)\Op(\delta_p)=\Op(\delta_{pn}\star\delta_e).
$$
Since $\Op$ intertwines $\lambda^G$ with $\Ad\pi^P$ by Lemma~\ref{lem:Fourier2},  by applying $\Ad\pi^P(h)$ we conclude that
$$
\Op(\delta_{g})\Op(\delta_h)=\Op(\delta_g\star\delta_h)
$$
for all $g,h\in G$, that is, $\Op$ is a representation of the algebra $(C(G),\star)$.

By \cite{MR1264314}*{Proposition~7}, the invertibility of $\Omega^*$ implies that the algebra $(C(G),\star)$ is semisimple. It follows that its image under $\Op$ is semisimple as well. From~\eqref{eq:op-prime} and the fact that $\lambda^P_p\F=\F\pi^N(p)$ by Lemma~\ref{lem:Fourier1} we see that $\Op(C(G))$ is spanned by the operators of the form $(\cdot,\zeta)\xi$ with $\xi\in\ell^2(P)$ and $\zeta\in\operatorname{Im}\F$. Thus, it consists of all operators on $\ell^2(P)$ that vanish on $(\operatorname{Im}\F)^\perp$. The operators $S$ such that $(\operatorname{Im}\F)^\perp\subset\ker S$ and $\operatorname{Im}S\subset (\operatorname{Im}\F)^\perp$ form an ideal with zero product in this algebra, so $\Op(C(G))$ is semisimple only when $(\operatorname{Im}\F)^\perp=0$, that is, when $\F$ is surjective. Since $|N|\le|P|$ by assumption, this means that $(\E(p,n))_{p,n}$ must be invertible.
\ep

\bigskip

\section{A Lazard-type correspondence between symplectic and nondegenerate cocycles}

Consider a finite field $\FF_q$ of characteristic $p$. Assume $\g$ is a finite-dimensional nilpotent Lie algebra over $\FF_q$ of nilpotency class $c(\g)$, so that any iterated commutator of length $c(\g)+1$ in~$\g$ is zero. If $c(\g)<p$, then by the classical Lazard correspondence (see, e.g., \cite{MR1615819}) we get a finite nilpotent $p$-group $G$ out of $\g$. Concretely, we take $G:=\g$ as a set, but in order to distinguish between elements of $\g$ and $G$ we write $\exp$ for the identity map $\g\to G$.  The product on $G$ is then given by
$$
\exp (x)\exp (y):=\exp(\BCH(x,y)),
$$
where $\BCH(X,Y)\in\Q\llangle X,Y\rrangle$ is the Baker--Campbell--Hausdorff series in noncommuting variables $X$ and $Y$. The assumption $c(\g)<p$ and explicit formulas for the $\BCH$-series, such as Dynkin's formula
$$
\BCH(X,Y)
=\sum_{n=1}^{\infty}\frac{(-1)^{n-1}}{n}
\sum_{\substack{r_1,s_1,\ldots,r_n,s_n\geq 0\\
                 r_i+s_i>0\ (1\leq i\leq n)}}
\frac{
  [X^{r_1}Y^{s_1}\cdots X^{r_n}Y^{s_n}]
}{
  \left(\sum_{i=1}^{n}(r_i+s_i)\right)
  \prod_{i=1}^{n}r_i!\,s_i!
},
$$
where $[X]:=X$, $[Y]:=Y$ and for a word $Z_1\cdots Z_m$ with $Z_i\in\{X,Y\}$ and $m\ge2$ we put
$$
[Z_1\cdots Z_m]
:=
[Z_1,[Z_2,\ldots,[Z_{m-1},Z_m]\ldots]],
$$
guarantee that the product is well-defined.

We have an adjoint action $\Ad\colon G\to\Aut(\g)$ defined by
$$
\Ad_{\exp x}(y):=e^{\ad_x}y,
$$
and correspondingly a coadjoint action $\Ad^*$ on the dual space $\g^*:=\Hom_{\FF_q}(\g,\FF_q)$. Here again the condition $c(\g)<p$ ensures that the operator $e^{\ad_x}$ is well-defined. Moreover, then $\log e^{\ad_x}=\ad_x$, and as a consequence, given $x\in\g$ and $f\in\g^*$, we have
\begin{equation}\label{eq:ad}
\Ad^*_{\exp x}(f)=f\quad\text{if and only if}\quad f\circ\ad_x=0.
\end{equation}

Assume next that we are given a $2$-cocycle $\omega\in Z^2(\g;\FF_q)$, so that $\omega\colon\g\times\g\to\FF_q$ is an alternating form satisfying
$$
\omega([x,y],z)+\omega([z,x],y)+\omega([y,z],x)=0.
$$
Consider the corresponding central extension $\g_\omega$ of $\g$. As usual, it will be convenient to identify~$\g_\omega$ with the space $\FF_q\oplus\g$ equipped with the Lie bracket
$$
[(a,x),(b,y)]:=(\omega(x,y),[x,y]).
$$
This Lie algebra has nilpotency class $c(\g)$ or $c(\g)+1$. Hence, if $c(\g)+1<p$, by Lazard's correspondence we get a nilpotent group $G_\omega$ and a central extension
$1\to\FF_q\to G_\omega\to G\to1$. We denote by $l_\omega\in Z^2(G;\FF_q)$ the corresponding $2$-cocycle. Explicitly, it is defined by the identity
\begin{equation*}\label{eq:L-cocycle}
\BCH((0,x),(0,y))=(l_\omega(\exp x,\exp y),\BCH(x,y)),
\end{equation*}
where the value of the $\BCH$-series on the left is computed in $\g_\omega$, while on the right in $\g$. Thus,
\begin{equation}\label{eq:L-cocycle2}
l_\omega(\exp x,\exp y)=\sum_{n=1}^{c(\g)+1}\frac{(-1)^{n-1}}{n}
\sum_{\substack{r_1,s_1,\ldots,r_n,s_n\geq 0\\
                 r_i+s_i>0\ (1\leq i\leq n)\\
                 2\le r_1+s_1+\ldots+r_n+s_n\le c(\g)+1}}
\frac{
  [x^{r_1}y^{s_1}\cdots x^{r_n}y^{s_n}]_\omega
}{
  \left(\sum_{i=1}^{n}(r_i+s_i)\right)
  \prod_{i=1}^{n}r_i!\,s_i!
},
\end{equation}
where for a word $z_1\cdots z_m$ with $z_i\in\{x,y\}$ and $m\ge2$ we put
$$
[z_1\cdots z_m]_\omega
:=
\omega(z_1,[z_2,\ldots,[z_{m-1},z_m]\ldots]).
$$

Take a nontrivial character $\chi\colon\FF_q\to\T$, and recall that it has the form $e^{\frac{2\pi i}{p}\tr(c\,\cdot)}$, where $\tr\colon\FF_q\to\FF_p=\Z/p\Z$ is the trace map and $c\in\FF_q^\times$. We then get a cocycle $\chi\circ l_\omega\in Z^2(G;\T)$.

\begin{theorem}
Let $\g$ be a finite-dimensional nilpotent Lie algebra over a finite field $\FF_q$ of characteristic $p$. Assume its nilpotency class satisfies $c(\g)<p-1$ and let $G$ be the corresponding finite nilpotent group. For a cocycle $\omega\in Z^2(\g;\FF_q)$ and a nontrivial character $\chi\colon\FF_q\to\T$, consider the cocycle $\chi\circ l_\omega\in Z^2(G;\T)$, where $l_\omega$ is given by~\eqref{eq:L-cocycle2}. Then the cocycle $\chi\circ l_\omega$ is nondegenerate if and only if the form $\omega$ is symplectic.
\end{theorem}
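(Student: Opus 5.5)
Since $G$ is a $p$-group, $\Hom(G,\T)$-twists play no role in the criterion. We use the standard characterization of nondegeneracy recalled in the introduction: $\sigma:=\chi\circ l_\omega$ is nondegenerate if and only if for every $g\ne e$ the character $h\mapsto\sigma(g,h)\overline{\sigma(h,g)}$ on the centralizer $C(g)$ is nontrivial.

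So the plan has three steps:
\begin{enumerate}
\item identify the centralizer of $g=\exp x$;
\item compute this commutator character on $C(g)$ in terms of $\omega$;
\item show that nontriviality for all $x\ne0$ is equivalent to nondegeneracy of $\omega$.
\end{enumerate}

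First, for $x\in\g$, the Lazard correspondence gives $C(\exp x)=\exp(\mathfrak c(x))$, where $\mathfrak c(x):=\{y:[x,y]=0\}$. Indeed, $\exp x$ and $\exp y$ commute iff $\Ad_{\exp x}y=y$ iff $e^{\ad_x}y=y$ iff $\ad_x y=0$, using $\log e^{\ad_x}=\ad_x$ as in~\eqref{eq:ad}.

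Second, for commuting lifts in $G_\omega$ we compute the group commutator. The lifts $\exp(0,x)$ and $\exp(0,y)$ with $[x,y]=0$ satisfy
$$
\exp(0,x)\exp(0,y)\exp(0,x)^{-1}=\exp\big(e^{\ad_{(0,x)}}(0,y)\big)=\exp\big((\omega(x,y),y)\big),
$$
because $\ad_{(0,x)}(0,y)=(\omega(x,y),0)$ is central and higher terms vanish. Hence the commutator equals $\exp(\omega(x,y),0)$, which is central. In terms of the cocycle, this means $l_\omega(g,h)-l_\omega(h,g)=\omega(x,y)$ for commuting $g=\exp x$, $h=\exp y$. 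One can also read this off from~\eqref{eq:L-cocycle2}: only the degree-two term $\tfrac12\omega(x,y)$ survives antisymmetrization, since all higher terms involve brackets $[x,y]$, or one checks via the group computation above. The character in question is therefore $y\mapsto\chi(\omega(x,y))$ on $\mathfrak c(x)$.

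Third, since $\chi$ is nontrivial and $y\mapsto\omega(x,y)$ is $\FF_q$-linear, the character $y\mapsto\chi(\omega(x,y))$ is trivial on the subspace $\mathfrak c(x)$ iff $\omega(x,\mathfrak c(x))=0$. This uses that a nonzero linear functional is surjective onto $\FF_q$ and $\chi$ is not identically $1$; note that $\chi\circ c$ for $c\in\FF_q^\times$ is also nontrivial, which handles scalar multiples. So $\sigma$ is nondegenerate iff for every $x\ne0$ there exists $y$ with $[x,y]=0$ and $\omega(x,y)\ne0$.

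If this holds, $\omega$ is nondegenerate, hence symplectic. Conversely, suppose $\omega$ is symplectic. Here the key point, and the main obstacle, is showing that the radical-type condition over all of $\g$ reduces to the centralizer. Suppose $\omega(x,y)=0$ for all $y\in\mathfrak c(x)$. Then the functional $\omega(x,\cdot)$ vanishes on $\ker\ad_x$, so it factors through $\operatorname{Im}\ad_x$, i.e.\ $\omega(x,\cdot)=f\circ\ad_x$ for some $f\in\g^*$. Since $\omega$ is nondegenerate, $f=\omega(z,\cdot)$ for some $z$, giving $\omega(x,y)=\omega(z,[x,y])$ for all $y$. The cocycle identity then gives
$$
\omega(z,[x,y])=-\omega([x,y],z)=\omega([y,z],x)+\omega([z,x],y).
$$
I expect to finish instead via a dimension count, since the naive manipulation is unclear. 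For the symplectic $\omega$, identify $\g^*\cong\g$. Then $\omega(x,\cdot)\in(\ker\ad_x)^\perp=\operatorname{Im}(\ad_x^*)$. The claim to aim for is that $\omega(x,\cdot)=\omega(z,[x,\cdot])$ forces $x=D z$, where $D$ is a derivation-like operator, leading to the equation $x=\ad_x^\omega(z)$ with $\ad_x$ nilpotent. Iterating should give $x=0$. Concretely, the relation says $\omega(x-\operatorname{ad}^\dagger_x z,\cdot)=0$, so $x=\ad_x^\dagger z$, where $\ad_x^\dagger$ is the $\omega$-adjoint of $\ad_x$; it is nilpotent and commutes suitably with $x$. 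Then $x\in\operatorname{Im}(\ad_x^\dagger)^k$ for all $k$ should follow, provided one shows $\ad_x^\dagger x=0$ (true, since $\omega(\ad_x^\dagger x,y)=\omega(x,[x,y])$, and this must be shown to vanish by the cocycle identity with $z=x$: $\omega([x,x],y)+\omega([y,x],x)+\omega([x,y],x)=0$ trivially). Making this nilpotence iteration rigorous is the delicate step I would work out first.
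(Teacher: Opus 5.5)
\textbf{There is a genuine gap: the converse direction (symplectic $\Rightarrow$ nondegenerate) is not proved.}

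Your reduction is sound. Steps 1--3 correctly show that $\chi\circ l_\omega$ is nondegenerate if and only if, for every $x\ne0$, the functional $\omega(x,\cdot)$ does not vanish on $\ker\ad_x$. This gives ``nondegenerate $\Rightarrow$ symplectic'' at once.

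The converse is the real content of the theorem, and the route you sketch for it fails.
\begin{itemize}
\item Your claim $\ad_x^\dagger x=0$, i.e.\ $\omega(x,[x,y])=0$ for all $y$, is false. The cocycle identity with two equal arguments is vacuous, as your own display shows. For example, take the Lie algebra with basis $e_1,\dots,e_4$ and brackets $[e_1,e_2]=e_3$, $[e_1,e_3]=e_4$. The form $\omega=e_1^*\wedge e_4^*+e_2^*\wedge e_3^*$ is a symplectic cocycle, yet $\omega(e_1,[e_1,e_3])=1$.
\item Even granting that claim, $x=\ad_x^\dagger z$ together with $\ad_x^\dagger x=0$ only gives $(\ad_x^\dagger)^2z=0$. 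It does not give $x\in\operatorname{Im}(\ad_x^\dagger)^k$ for all $k$.
\item More fundamentally, the property you need is not a statement about the single nilpotent operator $\ad_x$. On the same algebra, the nondegenerate but non-closed form $e_1^*\wedge e_2^*+e_3^*\wedge e_4^*$ satisfies $\omega(e_1,\ker\ad_{e_1})=0$, where $\ker\ad_{e_1}=\operatorname{span}\{e_1,e_4\}$. So closedness of $\omega$ and nilpotency of $\g$ must enter globally.
\end{itemize}

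\textbf{The missing idea.} Since $\omega(x,\cdot)$ vanishes on $\ker\ad_x$, it factors through $\ad_x$. So $\omega(x,v)=\alpha([x,v])$ for some $\alpha\in\g^*$. Then $\omega'(u,v):=\omega(u,v)-\alpha([u,v])$ is a cohomologous cocycle with $x\in\operatorname{rad}\omega'$. What you therefore need is that adding a coboundary cannot destroy nondegeneracy. One way to prove this:
\begin{itemize}
\item In $\g_\omega$ put $f_\alpha(a,u):=a-\alpha(u)$. Then $f_\alpha([(a,u),(b,v)])=\omega'(u,v)$.
\item By~\eqref{eq:ad} applied to $\g_\omega$, the stabilizer of $f_\alpha$ in $G_\omega$ is $\exp(\FF_q\oplus\operatorname{rad}\omega')$.
\item If $\operatorname{rad}\omega=0$, the coadjoint orbit of $f_0$ has $q^{\dim\g}$ elements. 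The action preserves the value at the central element $(1,0)$, so this orbit is all of $\{f_\alpha\}_{\alpha\in\g^*}$.
\item Stabilizers along an orbit are conjugate, hence $\operatorname{rad}\omega'=0$. This contradicts $0\ne x\in\operatorname{rad}\omega'$.
\end{itemize}

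This stabilizer computation is exactly the one in the paper. The paper, however, never passes through the centralizer criterion. It feeds the orbit size into the orbit method, which gives an irreducible representation of $G_\omega$ with central character $\chi$ and dimension $|\OO|^{1/2}$, and obtains both implications at once. Completed as above, your route would be a genuinely different proof that needs only orbit counting rather than the Kirillov character formula.
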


\bp
Consider the element $f\in\g_\omega^*$ defined by $f(a,x):=a$. By a version of Kirillov's orbit method we have a representation of $G_\omega$ corresponding to the character $\chi\circ f\colon\g_\omega\to\T$. The precise result sufficient for our purposes is given in \cite{MR2403615}*{Theorem~2.6}, according to which there is an irreducible unitary representation $\pi$ of $G_\omega$ with character
$$
\chi_\pi(\exp(a,x))=\frac{1}{|\OO|^{1/2}}\sum_{\tilde\chi\in\OO}\tilde\chi(a,x),
$$
where $\OO$ is the orbit of $\chi\circ f$ in the Pontryagin dual $\widehat\g_\omega:=\Hom(\g_\omega,\T)$ under the action of~$G_\omega$ on~$\widehat\g_\omega$ induced by the adjoint action on $\g_\omega$. In particular, $\dim\pi=|\OO|^{1/2}$.

Since $\FF_q\oplus0\subset\g_\omega$ is central, it is clear that $\tilde\chi(a,0)=\chi(a)$ for all $\tilde\chi\in\OO$ and $a\in\FF_q$. Therefore if we view $\FF_q$ as a subgroup of $G_\omega$ by identifying $a$ with $\exp(a,0)$, then $\chi_\pi(a)=|\OO|^{1/2}\chi(a)$, and hence $\pi(a)=\chi(a)1$. This shows that if we view $\pi$ as a projective representation of $G$, then the corresponding cocycle is $\chi\circ l_\omega$. It follows that this cocycle is nondegenerate if and only if $\dim\pi=|G|^{1/2}$, that is, if and only if $|\OO|=|G|$. Since $|G_\omega|=q|G|$, the last condition is equivalent to saying that the stabilizer of $\chi\circ f$ in $G_\omega$ has order~$q$.

Since the composition of $\chi$ with any nonzero element of $\g_\omega^*$ gives a nontrivial character of~$\g_\omega$, the stabilizer of $\chi\circ f$ coincides with the stabilizer of $f$ under the coadjoint action. By~\eqref{eq:ad}, the latter stabilizer is $\exp(\FF_q\oplus\operatorname{rad}(\omega))$, so it has order~$q$ if and only if $\operatorname{rad}(\omega)=0$.
\ep

\begin{corollary}\label{cor:Lagrangian}
In the setting of the theorem, assume we are given a symplectic cocycle $\omega\in Z^2(\g;\FF_q)$. Assume $\p,\n\subset\g$ are complementary Lagrangian subalgebras with respect to~$\omega$, and consider the corresponding subgroups $P,N\subset G$. Then $G=P\bowtie N$ and the matrix
\begin{equation}\label{eq:Hadamard}
\big(\chi\big(l_\omega(p\rhd n, p\lhd n)-l_\omega(p,n)\big)\big)_{p\in P,n\in N}
\end{equation}
is Hadamard.
\end{corollary}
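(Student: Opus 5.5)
The plan is to reduce the statement to Corollary~\ref{cor:cocycle-Hadamard} applied to the cocycle $\sigma:=\chi\circ l_\omega$. By the preceding theorem this cocycle is nondegenerate, since $\omega$ is symplectic. The matrix~\eqref{eq:Hadamard} is exactly the matrix~\eqref{eq:sigma-matrix} for this $\sigma$, because $\chi$ is a character, so additive differences of $l_\omega$-values become quotients of $\sigma$-values. It therefore remains to verify three things: $P$ and $N$ are subgroups with $G=P\bowtie N$; $\sigma|_{P\times P}=1$; and $\sigma|_{N\times N}=1$.

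\emph{Subgroups.} Since $\p$ is a Lie subalgebra, $\BCH(x,y)\in\p$ for $x,y\in\p$. Hence $P:=\exp\p$ is a subgroup of order $q^{\dim\p}$, and likewise $N:=\exp\n$. Because $\p\cap\n=0$, we get $P\cap N=\{e\}$, since $\exp$ is a bijection. Because $\dim\p+\dim\n=\dim\g$, we get $|P|\,|N|=|G|$. This gives $G=P\bowtie N$ by the criterion recalled in Section~1.

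\emph{Triviality on $P\times P$ and $N\times N$.} Take $x,y\in\p$. Every term in~\eqref{eq:L-cocycle2} has the form $\omega(z_1,[z_2,\ldots,[z_{m-1},z_m]\ldots])$ with all $z_i\in\{x,y\}\subset\p$. The iterated bracket lies in $\p$ because $\p$ is a subalgebra. Since $\p$ is Lagrangian, in particular isotropic, each such term vanishes. Thus $l_\omega|_{P\times P}=0$, and so $\sigma|_{P\times P}=1$. The same argument applies to $\n$.

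Corollary~\ref{cor:cocycle-Hadamard} then gives that the matrix is Hadamard. I expect no serious obstacle here; the only point requiring care is confirming that~\eqref{eq:L-cocycle2} involves only $\omega$ applied to elements of the subalgebra. This holds because we work in $\g_\omega$ restricted to $\FF_q\oplus\p$, which is an abelian extension on which the central component is governed by $\omega|_{\p\times\p}=0$.
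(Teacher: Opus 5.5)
Your proposal is correct and follows the paper's proof: apply Corollary~\ref{cor:cocycle-Hadamard} to $\sigma=\chi\circ l_\omega$, which is nondegenerate by the theorem and trivial on $P\times P$ and $N\times N$. Your term-by-term check via~\eqref{eq:L-cocycle2} just spells out what the paper calls clear by construction: every term $\omega(z_1,[z_2,\ldots,[z_{m-1},z_m]\ldots])$ vanishes for $z_i\in\p$, and likewise for $\n$.
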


\bp
Since $\p\cap\n=0$, we have $P\cap N=\{e\}$, and as $|P|\,|N|=|G|$, it follows that $G=P\bowtie N$. Consider the cocycle $\sigma:=\chi\circ l_\omega$ on $G$. By construction it is clear that $\sigma|_{P\times P}=1$ and $\sigma|_{N\times N}=1$. Since $\sigma$ is nondegenerate by the theorem, the result follows from Corollary~\ref{cor:cocycle-Hadamard}.
\ep

The matrix~\eqref{eq:Hadamard} can be viewed as a quantization of the Fourier matrix $\big(\chi(-\omega(x,y))\big)_{x\in\p,y\in\n}$. Namely, in the setting of the above corollary, consider the Lie algebra $\g[t]$ over the polynomial ring $\FF_q[t]$ with Lie bracket
$$
[xt^n,yt^m]:=[x,y]t^{n+m+1}\qquad (x,y\in\g).
$$
The Lazard correspondence still applies and we get an infinite group $G_t$ together with a bijection $\exp\colon\g[t]\to G_t$. Extend $\omega$ by $\FF_q[t]$-bilinearity to $\g[t]$. It remains a Lie algebra $2$-cocycle for the Lie bracket on $\g[t]$. We then get a central extension $1\to\FF_q[t]\to G_{t,\omega}\to G_t\to1$ and the corresponding cocycle $l_{t,\omega}\in Z^2(G_t;\FF_q[t])$. From the BCH-series and the definition of the bracket on $\g[t]$ it is clear that
$$
l_{t,\omega}(\exp x,\exp y)=\frac{1}{2}\omega(x,y)+O(t)\qquad (x,y\in\g[t]).
$$

Consider the subgroups $P_t,N_t\subset G_t$ corresponding to $\p[t],\n[t]\subset\g[t]$. Clearly, $P_t\cap N_t=\{e\}$. We claim that $(P_t,N_t)$ is a matched pair and the inverse of the product map $P_t\times N_t\to G_t$ is polynomial over~$\FF_q[t]$ in the exponential coordinates. The Lagrangian property does not play any role here, so we will prove this claim for any pair of complementary subalgebras $\p,\n\subset\g$. The proof is by induction on $\dim\g$. When $\dim \g=1$, there is nothing to prove. For the induction step, assume $\p\ne\g$, as otherwise again there is nothing to prove. Since $\g$ is nilpotent, there is a codimension one ideal $\mathfrak h\subset\g$ containing $\p$. Let $H_t:=\exp(\mathfrak h[t])$. Choose an element $u\in\n\setminus\mathfrak h$. Let $f\in\g^*$ be the linear functional such that $\mathfrak h=\ker f$ and $f(u)=1$. Extend~$f$ by $\FF_q[t]$-linearity to~$\g[t]$.

Take any $z\in\g[t]$. Then $\exp(z)\exp(-f(z)u)\in H_t$, since $\g[t]/\mathfrak h[t]\cong\FF_q[t]$ is an abelian Lie algebra and hence $G_t/H_t$ is isomorphic to $\FF_q[t]$, with an isomorphism explicitly given by $\exp(z)H_t\mapsto f(z)$. By the inductive assumption applied to $\mathfrak h$ we can write $\exp(z)\exp(-f(z)u)$ as $\exp(x)\exp(y)$ for uniquely defined $x\in\p[t]$ and $y\in(\n\cap\mathfrak h)[t]$, and both $x$ and $y$ are given by polynomial expressions over $\FF_q[t]$ in $\exp^{-1}\big(\exp(z)\exp(-f(z)u)\big)$. Then
$$
\exp(z)=\exp(x)\exp(y)\exp(f(z)u).
$$
Since $\exp(y)\exp(f(z)u)\in N_t$, this finishes the proof of the induction step and hence of our claim. Note also that since the Lie bracket on $\g[t]$ has zero constant term, the corresponding dressing actions $\rhd_t\colon P_t\times N_t\to N_t$ and $\lhd_t\colon P_t\times N_t\to P_t$ are trivial modulo $t$ in the exponential coordinates.

We can now define a polynomial map $\Psi\colon\p\times\n\to\FF_q[t]$ by
$$
\Psi(x,y):=l_{t,\omega}(\exp(x)\rhd_t\exp(y), \exp(x)\lhd_t\exp(y))-l_{t,\omega}(\exp(x),\exp(y)).
$$
By construction we have $\Psi(x,y)|_{t=0}=-\omega(x,y)$ and
$$
\Psi(x,y)|_{t=1}= l_\omega(\exp(x)\rhd\exp(y), \exp(x)\lhd\exp(y))-l_\omega(\exp(x),\exp(y)).
$$
More generally, we can specialize $\Psi$ at any $s\in\FF_q$. The matrix $(\chi(\Psi(x,y)|_{t=s}))_{x\in\p,y\in\n}$ is Hadamard by Corollary~\ref{cor:Lagrangian} applied to $\g$ equipped with the scaled Lie bracket $s[\cdot,\cdot]$. Thus we have a polynomial family of phases producing Hadamard matrices, with the Fourier matrix $\big(\chi(-\omega(x,y))\big)_{x\in\p,y\in\n}$ at $s=0$ and the matrix~\eqref{eq:Hadamard} at $s=1$. This justifies the view that \eqref{eq:Hadamard} is a quantization of the Fourier matrix.

\begin{example}
Let $q\geq 7$ be a prime, and consider the following eight-dimensional Lie algebra~$\g$ over~$\mathbb F_q$ with basis
$a_1,a_2,a_3,a_4,b_1,b_2,b_3,b_4$. The nonzero brackets, up to skew-symmetry, are
\begin{align*}
[a_1,a_4]&=[a_1,b_4]=a_2,\\
[b_1,a_4]&=[b_1,b_4]=b_2,\\
[a_1,a_2]&=a_3,\\
[b_1,b_2]&=b_3,\\
[a_1,b_2]&=[b_1,a_2]=a_3+b_3,\\
[a_1,b_3]&=[b_1,a_3]=a_4-b_4.
\end{align*}
This is a finite field version of the complexification of a real Lie algebra from \cite{MR4868109}*{Theorem~5.1(ii)}. The lower central series of $\g$ is
$$
\g\supset\operatorname{span}\{a_2,b_2,a_3,b_3,a_4-b_4\}\supset
\operatorname{span}\{a_3,b_3,a_4-b_4\}\supset\mathbb F_q(a_4-b_4)\supset0.
$$
Therefore $c(\g)=4$. Writing $a_i^*,b_i^*$ for the dual basis vectors, define
$$
\omega:
=
a_1^*\wedge b_1^*
-a_2^*\wedge b_3^*
+a_3^*\wedge b_2^*
-a_4^*\wedge b_4^*.
$$
This is a symplectic $2$-cocycle. The Lie subalgebras
$$
\p:=\operatorname{span}
\{a_1,a_2,a_3,a_4\}
\quad\text{and}\quad
\n:=\operatorname{span}
\{b_1,b_2,b_3,b_4\}
$$
are complementary Lagrangian subalgebras.
Thus, we are in a position to apply Corollary~\ref{cor:Lagrangian}.

In order to do computations it will be convenient to slightly modify the exponential coordinates. Observe that if we put
$$
\p_0:=\operatorname{span}\{a_2,a_3,a_4\},
\qquad
\n_0:=\operatorname{span}\{b_2,b_3,b_4\},
$$
then $\mathfrak a:=\p_0\oplus\n_0$ is an abelian ideal of $\g$.
In view of this and the relation $[a_1,b_1]=0$ we parameterize~$P$ and~$N$~by
\begin{align*}
p(x,r,s,t)&:=\exp(r a_2+s a_3+t a_4)\exp(xa_1),\\
n(y,R,S,T)&:=\exp(yb_1)\exp(Rb_2+Sb_3+Tb_4).
\end{align*}
In these coordinates, the group laws are
$$
(x,r,s,t)(x',r',s',t')\\
=
\Big(
x+x',
r+r'+xt',
s+s'+xr'+\frac{x^2t'}2,
t+t'
\Big)
$$
on $P$, and
$$
(y,R,S,T)(y',R',S',T')
=
\Big(
y+y',
R+R'-y'T,
S+S'-y'R+\frac{(y')^2T}{2},
T+T'
\Big)
$$
on $N$. To compute the dressing actions, put
$$
U_p:=r a_2+s a_3+t a_4,\qquad
V_n:=Rb_2+Sb_3+Tb_4,
$$
and consider the operators
\[
D_p:=x\ad_{a_1}|_{\mathfrak a},\qquad D_n:=y\ad_{b_1}|_{\mathfrak a}.
\]
Since $\mathfrak a$ is abelian and $[a_1,b_1]=0$, we have
\[
pn=
\exp(yb_1)
\exp\bigl(e^{-D_n}U_p+e^{D_p}V_n\bigr)
\exp(xa_1).
\]
The two terms in the middle exponent are
\begin{align*}
e^{-D_n}U_p
={}&r a_2+(s-yr)a_3
+\Big(t-ys+\frac{y^2r}{2}\Big)a_4\\
&-yt\,b_2
+\Big(-yr+\frac{y^2t}{2}\Big)b_3
+\Big(ys-\frac{y^2r}{2}\Big)b_4,
\\[1ex]
e^{D_p}V_n
={}&xT\,a_2
+\Big(xR+\frac{x^2T}{2}\Big)a_3
+\Big(xS+\frac{x^2R}{2}\Big)a_4\\
&+R b_2+(S+xR)b_3
+\Big(T-xS-\frac{x^2R}{2}\Big)b_4.
\end{align*}
Splitting their sum as $U'_{pn}+V'_{pn}$, with $U'_{pn}\in\p_0$ and $V'_{pn}\in\n_0$, gives
\begin{align*}
p\rhd n
&=
\Big(
y,\,
R-yt,\,
S+xR-yr+\frac{y^2t}{2},
T-xS-\frac{x^2R}{2}+ys-\frac{y^2r}{2}
\Big),\\
p\lhd n
&=
\Big(
x,\,
r+xT,\,
s-yr+xR+\frac{x^2T}{2},
t-ys+\frac{y^2r}{2}+xS+\frac{x^2R}{2}
\Big).
\end{align*}

We next compute the phase
$$
\Psi(p,n):=l_\omega(p\rhd n, p\lhd n)-l_\omega(p,n)\in\FF_q.
$$
Note that if we denote by $z$ the central element $(1,0)\in\g_\omega$, then, in view of~\eqref{eq:exchange} and the facts that $\p$ and $\n$ are Lie subalgebras of $\g_\omega$, so that $P$ and $N$ can be viewed as subgroups of $G_\omega$, the phase is determined by the following relation in $G_\omega$:
\begin{equation}\label{eq:phase}
\exp(\Psi(p,n)z)pn=(p\rhd n)(p\lhd n).
\end{equation}
The $\BCH$-formula, together with the facts that the central extension of $\mathfrak a$ is two-step nilpotent and $[a_1,b_1]_{\g_\omega}=z$, gives then that
$$
\Psi(p,n)
=-xy-\frac12\omega\big(e^{-D_n}U_p,e^{D_p}V_n\big)+\frac12\omega(V'_{pn},U'_{pn}).
$$
The final expression is
\begin{align*}
\Psi(x,r,s,t;y,R,S,T)
=&-xy+r(S+xR)-(s-yr)R\\
&+\Big(t-ys+\frac{y^2r}{2}\Big)
\Big(T-xS-\frac{x^2R}{2}\Big)\\
&-\frac12\Big(
yr^2-2yst+y^2(rt+s^2)
-y^3rs+\frac{y^4r^2}{4}
\Big)\\
&-\frac12\Big(
xR^2-2xST+x^2(S^2-RT)
+x^3RS+\frac{x^4R^2}{4}
\Big).
\end{align*}
Therefore the conclusion is that the following $q^4\times q^4$ matrix is Hadamard for all primes $q\ge7$:
$$
\Big(
e^{\frac{2\pi i}{q}\Psi(x,r,s,t;y,R,S,T)}\Big)_{(x,r,s,t),(y,R,S,T)\in\mathbb F_q^4}.
$$
A direct computation shows that the same is in fact true for all odd primes.

\smallskip

We claim that, for $q\ge5$, the matrix $(e^{\frac{2\pi i}{q}\Psi(u,v)})_{u,v\in\mathbb F_q^4}$
is neither equivalent to a Fourier matrix nor of Di\c{t}\u{a} type. Recall that an Hadamard matrix is of \emph{Di\c{t}\u{a} type}
if it is equivalent, by permutations and multiplication of rows
and columns by scalars of modulus one, to a block matrix
\begin{equation}\label{eq:Dita}
D=\begin{pmatrix}
a_{11}B_1 & \cdots & a_{1k}B_k\\
\vdots & & \vdots\\
a_{k1}B_1 & \cdots & a_{kk}B_k
\end{pmatrix},
\end{equation}
where $A=(a_{ij})$ is an Hadamard matrix of size $k>1$
and $B_1,\ldots,B_k$ are Hadamard matrices of a common
size $m>1$. This construction generalizes the tensor product $A\otimes B$,
obtained when all $B_j$ coincide.

In order to prove our claim we introduce two invariants similar to~\cite{MR2372303}*{Definition~3.1}. Given an Hadamard matrix $H=(H_{ij})_{i,j}$, denote by $m_1(H)$ the maximal number of distinct ordered pairs $(i,i')$ ($i\ne i'$) such that the row quotients $(H_{ij}/H_{i'j})_j$ are equal up to phase factors. Denote by $m_2(H)$ the maximal number of disjoint ordered pairs $(i,i')$ ($i\ne i'$) with the same property.

Let us compute these invariants for our matrix $H:=(e^{\frac{2\pi i}{q}\Psi(u,v)})_{u,v\in\mathbb F_q^4}$. Since $H_{u,0}=1$, proportional row quotients of $H$ are in fact equal. Comparing coefficients in $\Psi(u,v)-\Psi(u',v)$ it is not too difficult to see that this difference determines $(u,u')$ uniquely unless~$u$ and~$u'$ have the same first three coordinates, namely, one distinguishes successively the cases (a) $x\ne x'$, (b)~$x=x'$ but $r\ne r'$, and finally (c) $(x,r)=(x',r')$ but $s\ne s'$.
Coefficient comparison is valid, since the degree in each column variable is less than $q\ge5$. The remaining quotients to consider~are
$$
 \frac{H_{(x,r,s,t+d),v}}{H_{(x,r,s,t),v}}
 =
 \exp\!\Big(\frac{2\pi i d}{q}
 \Big(T-xS-\frac{x^2R}{2}+ys-\frac{y^2r}{2}\Big)\Big),
 \qquad d\ne0.
$$
Such a row quotient determines $(x,r,s,d)$, so the same quotient occurs exactly $q$ times, with all corresponding pairs of rows contained in the same set
of $q$ rows. It follows that
$$
m_1(H)=q,\qquad m_2(H)=\frac{q-1}{2}.
$$

On the other hand, for the Fourier matrix $F:=(\chi(x))_{x\in B,\chi\in\widehat B}$ of any finite abelian group $B$ we have $m_1(F)=|B|$, while
for any Di\c{t}\u{a}-type matrix~\eqref{eq:Dita} of size $km$ we have $m_2(D)\ge m$, since for any fixed $1\le i<i'\le k$ we have $m$ disjoint row pairs $((i,\alpha),(i',\alpha))$, $1\le\alpha\le m$, with the same entrywise quotient. Therefore if our matrix was equivalent to a Fourier matrix or to a matrix of Di\c{t}\u{a} type, we would have $m_1(H)=q^4$ or $m_2(H)\ge q$, resp. This proves our claim. In fact, the claim remains true for $q=3$, but this case requires a separate argument, which we omit.

\smallskip

Let us finally say a few words about $\g$ and $G$, assuming that $q\ge7$. This example is designed to satisfy a couple of special requirements.

Consider the $2$-cocycle $\sigma:=e^{\frac{2\pi i}{q}l_\omega}$. Recall that an isotropic (with respect to the nondegenerate class $[\sigma]$)  subgroup $H\subset G$ is called \emph{Lagrangian} if $|H|=|G|^{1/2}$. By \cite{MR3210715}*{Lemma~2.2}, a normal isotropic subgroup is necessarily abelian. We claim that our group $G$ does not have any normal Lagrangian subgroups. In fact, the order of any normal isotropic subgroup of $G$ is at most $q^3$. Indeed, if $H$ is such a subgroup, then $H=\exp\mathfrak h$ for some abelian ideal $\mathfrak h\subset\g$. Then $\exp\colon\mathfrak h\to H$ is an isomorphism and $\exp^*(l_\omega)=\frac{1}{2}\omega$ on $\mathfrak h$. Since $\operatorname{res}^G_H[\sigma]=0$ by assumption, it follows that the ideal $\mathfrak h\subset\g$ is isotropic with respect to $\omega$. Observe next that we must have $\mathfrak h\subset\mathfrak a$. In order to see this, consider an element
$$
X=ua_1+vb_1+r\in\mathfrak h,\quad\text{with}\quad r\in\mathfrak a.
$$
Then
$$
0=[X,[X,a_4]] =u(u+2v)a_3+v(v+2u)b_3.
$$
As $q\ne 3$, the equations $u(u+2v)=v(v+2u)=0$ force $u=v=0$. Thus, $\mathfrak h\subset\mathfrak a$. Since the form~$\omega|_{\mathfrak a}$ is nondegenerate, we conclude that $\dim \mathfrak h\le\frac{1}{2}\dim\mathfrak a=3$, hence $|H|\le q^3$, proving our claim. By~\cite{MR2534251}*{Theorem~A} it follows that the nondegenerate class $[\sigma]$ cannot be obtained by the bijective $1$-cocycle method. Note also that \cite{MR3210715}*{Theorem~1.9} (see also~\cite{MR3499032}*{Theorem~8.4}) implies that eight is the minimal dimension where one can hope that there are no Lagrangian ideals, so dimension-wise our $\g$ is the smallest possible nilpotent Lie algebra with this property.

The second special property is that the Lagrangian subalgebras $\p$ and $\n$ have trivial intersections with the center $\mathfrak z(\g)=\FF_q(a_4-b_4)$. Thus neither chosen Lagrangian subalgebra contains a nonzero central ideal along which one could perform symplectic reduction. \ee
\end{example}

\bigskip

\end{document}